\documentclass[draft]{amsart}
\usepackage{amssymb,amscd}
\usepackage[all]{xy}



\newcommand{\Z}{\mathbb Z}
\newcommand{\Q}{\mathbb Q}

\newcommand{\C}{\mathbb C}
\newcommand{\N}{\mathbb N}


\newcommand{\cA}{\mathcal A}
\newcommand{\cB}{\mathcal B}

\newcommand{\cD}{\mathcal D}
\newcommand{\cE}{\mathcal E}
\newcommand{\cF}{\mathcal F}

\newcommand{\cL}{\mathcal L}
\newcommand{\cM}{\mathcal M}
\newcommand{\cN}{\mathcal N}
\newcommand{\cO}{\mathcal O}

\newcommand{\cV}{\mathcal V}


\renewcommand{\t}{\widetilde}

\newcommand{\tX}{\widetilde X}


\renewcommand{\frak}{\mathfrak}
\newcommand{\m}{\mathfrak m}

\DeclareMathOperator{\Hom}{Hom}

\DeclareMathOperator{\specan}{Specan}

\DeclareMathOperator{\di}{div}

\DeclareMathOperator{\LF}{LF}

\DeclareMathOperator{\mult}{mult}

\renewcommand{\:}{\colon}

\newcommand{\vord}[1]{#1\text{-}\mathrm{ord}}

\newcommand{\gen}[1]{\langle #1 \rangle}

\newcommand{\fl}[1]{#1^{\flat}}

\newcommand{\defset}[2]{{\left\{#1\,\left| \,#2 \right. \right\}}}
\newcommand{\X}{(X,o)}
\newcommand{\abX}{(X^{u},o)}
\newcommand{\Y}{(Y,o)}

\newcommand{\ten}{\circle*{0.25}}

 \theoremstyle{plain}
\newtheorem{thm}{Theorem}[section]
\newtheorem{cor}[thm]{Corollary}
\newtheorem{lem}[thm]{Lemma}
\newtheorem{prop}[thm]{Proposition}

 \theoremstyle{definition}
\newtheorem{defn}[thm]{Definition}

\newtheorem{ex}[thm]{Example}

\theoremstyle{remark}

\newtheorem{rem}[thm]{Remark}

\numberwithin{equation}{section}

\newcommand{\thmref}[1]{Theorem~\ref{#1}}
\newcommand{\lemref}[1]{Lemma~\ref{#1}}
\newcommand{\corref}[1]{Corollary~\ref{#1}}
\newcommand{\proref}[1]{Proposition~\ref{#1}}
\newcommand{\remref}[1]{Remark~\ref{#1}}

\newcommand{\defref}[1]{Definition~\ref{#1}}

\newcommand{\sref}[1]{Section~\ref{#1}}

\begin{document}

\title[The multiplicity of abelian covers]{The multiplicity of abelian covers of splice quotient singularities}

\author{Tomohiro Okuma}

\address{Department of Education, Yamagata University, 
 Yamagata 990-8560, Japan.}
  \curraddr{Department of Mathematical Sciences, 
Yamagata University, 
 Yamagata 990-8560, Japan.}
\email{okuma@sci.kj.yamagata-u.ac.jp}

\dedicatory{Dedicated to Professor Tadashi Tomaru
on the occasion of his sixtieth birthday}
 \date{May 2, 2011}
\thanks{This work was partly supported by KAKENHI 20540060.
}

\keywords{Splice quotient singularity,
abelian cover, multiplicity}

\subjclass[2010]{Primary 14B05; 
Secondary 14J17, 32S10, 32S25}

\begin{abstract}
From a resolution graph with certain conditions,  Neumann and
 Wahl constructed 
an equisingular family of surface singularities called
 splice quotients. For this class some fundamental analytic
 invariants have been computed from their resolution graph.
In this paper we give a method to compute the multiplicity of an abelian covering  of
 a splice quotient  from its resolution graph
 and the Galois group.
\end{abstract}

\maketitle

\section{Introduction}

Splice quotient  singularities were introduced by Neumann
and Wahl (\cite{nw-uac}, \cite{nw-CIuac}, \cite{nw-HSL}), which are a broad generalization of
quasihomogeneous complex surface singularities with rational 
homology sphere links. 

Let $\X$ be a normal complex surface singularity with  rational 
homology sphere link and $\Gamma$ the weighted dual graph of
$\X$, i.e., the  weighted dual graph
of the exceptional set of the minimal good resolution. 
It is well-known that $\Gamma$  determines the topology of
$\X$, and the converse is also true. 
Our main issue is to clarify which analytic invariants can be computed from $\Gamma$, and to give a method to compute them.
Let $\varphi\: (X^u,o) \to \X$ denote the {\itshape universal abelian cover}.
Under a certain condition on $\Gamma$, 
 Neumann and Wahl gave a method to construct explicitly the so-called
 ``splice diagram equations''
 from $\Gamma$.
These equations define a complete intersection normal surface 
 singularity $Z$  homeomorphic to $X^u$, 
 having an action of the Galois group of
 $\varphi$ with quotient $Y$  homeomorphic to $X$; 
the singularity $Y$ is called a {\itshape splice quotient}.
It is very natural to expect that some  analytic invariants of splice
quotients can be computed from their weighted dual graphs.
In fact, since the end curve theorem (a characterization of
splice quotients) was given by Neumann--Wahl \cite{nw-ECT}, 
we have been able to compute
the geometric genus (\cite{o.pg-splice}), the dimension of cohomology
groups of certain invertible sheaves (\cite{o.pg-splice}, \cite{no-CIC}, \cite{no-SW}, \cite{nem.coh-sq}), and the multiplicity (\cite{nem.coh-sq}) of splice quotients from their weighted dual graphs.
However, we should remark that the embedding dimension, which is one of the most fundamental analytic invariants, is not topological even for quasihomogeneous singularities.

In this article, we generalize the results on the multiplicity. Namely, 
we consider an arbitrary abelian covering
 $(X',o) \to \X$ of a splice quotient $\X$ such that
 $X'\setminus\{o\} \to X\setminus \{o\}$ is unramified, and
  we give an explicit method  to compute the multiplicity of
 $(X',o)$  from $\Gamma$  and the Galois group $G'$ of $X'\to X$. 
Consequently, the multiplicity of $(X,o)$ and $(X^u,o)$ can be computed from $\Gamma$.
Note that we do not need to compute the weighted dual graph $\Gamma'$
 of $(X',o)$;  it seems difficult to obtain $\Gamma'$ from $\Gamma$ and $G'$.
The outline of our approach is as follows.
If the ideal sheaf $I_Z$ of the maximal ideal cycle $Z$ on a good resolution $\tX$ of $X$ is generated by global sections, then the multiplicity of $\X$ is $-Z\cdot Z$, where $Z\cdot Z$ denotes the self-intersection number of $Z$. 
Let $\t X'$ be a good resolution of $(X',o)$ with a morphism $\psi\: \tX' \to \tX$ that naturally induces the covering $X'\setminus\{o\} \to X\setminus \{o\}$. 
The end curve theorem enables us to determine whether required functions on $\tX$ (resp. $\t X'$) exist by $\Gamma$ (resp. $\Gamma$ and $G'$).
Then we can find a rational cycle $Z'$ on $\tX$ such that 
$\psi^*Z'$ is the maximal ideal cycle on $\tX'$, and blowing-ups that resolve the base points of the ideal sheaf. 
Thus, on an appropriate good resolution, 
 the multiplicity of $(X',o)$ can be computed as $|G'|(-Z'\cdot Z')$, where $|G'|$ denotes the order of the group $G'$.  

The multiplicity of these singularities can also be obtained by
the theory of Gr{\"o}bner basis (one needs to compute the equations for $(X',o)$ from splice diagram equations with group action).  
However our method is rather based on linear algebra 
and combinatorics on graphs and it may be easier to apply.
We will use
 ``monomial cycles'' to connect directly the combinatorics of
the  graph and the complex structure of the singularity.

This article is organized as follows.
\sref{s:abeliancover} summarizes basic facts on abelian coverings of
a surface singularity. In \sref{s:max-mult}, we discuss the maximal ideal cycle 
on a resolution  of an abelian covering in terms
of rational cycles on  a resolution  of the underlying singularity.
In \sref{s:SQ}, we give a brief introduction of splice
quotients and a lemma on generators of certain ideals 
 associated with a cycles on a resolution of a splice quotient.
The main theorem is proved in \sref{s:mult}; the point here
is to control the base points of the
ideal sheaf on a resolution generated by the maximal
ideal of the singular point. 
In the last section, applying our method,
 we compute some concrete examples.

The author thanks the referees for their helpful comments and suggestions that greatly improved this paper.

\section{Abelian covers}\label{s:abeliancover}

Let $\X$ be a normal complex surface singularity with
rational homology sphere link $\Sigma$.
We may assume that $X$ is homeomorphic to the cone over the
link $\Sigma$.
Let $\pi\:\tX \to X$ be a good resolution and  
 $\{E_v\}_{v \in \cV}$ the set of irreducible components
 of  the exceptional
divisor $E:=\pi^{-1}(o)$.
By the assumption,  $E$ is a tree of rational curves.
We call an element of the group
$$
L:=\sum _{v\in \cV}\Z E_v
$$
a cycle. 
Since the intersection matrix
$I(E):=(E_v\cdot E_w)$ is negative definite, for every $v\in
\cV$ there exists an effective $\Q$-cycle $E_v^*\in L\otimes \Q$ such that
$E_v^*\cdot E_w=-\delta_{vw}$,
where $\delta_{vw}$ denotes the Kronecker delta.
We define abelian groups as follows:
 $$
L^*:=\sum_{v \in \cV}\Z E_v^* \subset L\otimes \Q, \quad H:=L^*/L.
 $$
Then $H\cong H_1(\Sigma,\Z)$.

\begin{defn}
We call a finite morphism $\Y \to \X$ of  normal surface
 singularities   
an {\itshape abelian covering} if it induces  an
 unramified abelian covering $Y\setminus\{o\}\to
 X\setminus\{o\}$.
We denote the Galois group of the covering by $G(Y/X)$.
The {\itshape universal abelian
 covering} is the abelian covering  $\abX \to \X$ with
 $G(X^{u}/X)=H$.
\end{defn}
There is a one-to-one correspondence between abelian covers
of $\X$ and subgroups of $H$.
Next we give a description of the abelian covering in
terms of eigensheaves (see \cite{o.pg-splice} or 
\cite{o.uac-rat} for details).

Let $\hat H=\Hom (H, \C^*)$. Then an isomorphism $\theta\: H \to
\hat H$ is defined by 
$$
\theta(h)(h')=\exp(2\pi\sqrt{-1}h\cdot h'), \quad h, h'\in H,
$$
where the  pairing $(h,h')\mapsto h\cdot h' \in \Q/\Z$ is determined by
the matrix $I(E)$.
Let $\Theta$ denote the composite of the natural map 
$L^*\to H$ and  $\theta$.
For any divisor $D$ on $\tX$, we define
$$
c_1(D)=\sum_{v\in \cV}(-D\cdot E_v)E_v^* \in L^*.
$$
 There exists a set  $\{L_{\chi}\}_{\chi \in \hat H}$ of
effective divisors on $\tX$ with  properties:
 \begin{enumerate}
  \item  $\Theta(c_1(L_{\chi}))=\chi$;
  \item  $|H|L_{\chi}\sim c_1(|H|L_{\chi})$;
  \item $[c_1(L_{\chi})]=0$, where $[ D ]$ denotes the
	integral part of a $\Q$-divisor $D$.
 \end{enumerate} 
Since $\mathrm{Pic}(\tX)$ has no torsion elements,
 such a divisor $L_{\chi}$ is uniquely determined up to
linear equivalence. For every $D\in L^*$ we set
$$
\sigma(D)=L_{\Theta(D)}+[D].
$$
Note that $|H|D\sim\sigma(|H|D)$ for any $D \in L^*$. 
Let $\cL_{\chi}=\cO_{\tX}(-L_{\chi})$.

\begin{prop}[{cf. \cite[\S 3.2]{o.uac-rat}}]\label{p:uac-rat}
We have a  collection $\{\cL_{\chi} \otimes \cL_{\chi'} \to
 \cL_{\chi\chi'}\}$  of homomorphisms
which defines the $\cO_{\tX}$-algebra structure of
an $\cO_{\tX}$-module $\cA:=\bigoplus _{\chi \in 
 \hat H}\cL_{\chi}$ satisfying  the following:
\begin{enumerate}
 \item The projection $\specan _{X}\pi_*\cA \to X$
       coincides with $q\:X^{u} \to X$.
 \item $\tX^{u}:=\specan _{\tX}\cA$  has only cyclic quotient
       singularity and there is a morphism $\rho\: \tX^u \to
       {X^u}$ which is a partial resolution, and the following
       diagram is commutative:
$$
\begin{CD}
 \t X^u @>{p}>> \tX \\
@V{\rho}VV     @VV{\pi}V \\
X^u @>>{q}> X
\end{CD}
$$
 \item The module $\cL_{\chi}$
       (resp. $\pi_*\cL_{\chi}$) is the
       $\chi$-eigensheaf of $p_*\cO_{\tX^u}$ (resp. $q_*\cO_{{X^u}}$).
\end{enumerate}
\end{prop}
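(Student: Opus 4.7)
The plan is to start from the topologically-defined universal abelian cover $q\:X^u\to X$ (which exists because $\pi_1(X\setminus\{o\})^{\mathrm{ab}}\cong H_1(\Sigma,\Z)=H$) and reverse-engineer the algebra $\cA$ from its pullback to $\tX$. I would form the fiber product $\tX\times_X X^u$ and let $\tX^u$ be its normalization; the second projection yields $\rho\:\tX^u\to X^u$, and $p\:\tX^u\to\tX$ is then a finite $\hat H$-Galois cover ramified only along $E$. The $\hat H$-action gives an eigensheaf decomposition $p_*\cO_{\tX^u}=\bigoplus_\chi\cM_\chi$ into reflexive rank-one $\cO_{\tX}$-modules; since $\tX$ is smooth, the $\cM_\chi$ are line bundles, and the algebra structure on $p_*\cO_{\tX^u}$ provides the multiplications $\cM_\chi\otimes\cM_{\chi'}\to\cM_{\chi\chi'}$ for free.

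Next I would identify each $\cM_\chi$ with $\cL_\chi=\cO_{\tX}(-L_\chi)$. The intersection numbers $\cM_\chi\cdot E_v$ are determined by the local ramification indices of $p$ along each $E_v$ (via the character $\chi$ evaluated on the image in $\hat H$ of the monodromy around $E_v$), and these reproduce exactly the numbers $-L_\chi\cdot E_v$ entering $c_1(L_\chi)$. Since $L_\chi$ is characterized up to linear equivalence by properties (1)--(3) (as asserted just before the proposition), the identification $\cM_\chi\cong\cL_\chi$ follows; this realizes $\cA\cong p_*\cO_{\tX^u}$ as graded $\cO_{\tX}$-algebras, yielding the eigensheaf statement of (3) for $p_*\cO_{\tX^u}$.

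For (1) and (2): $\specan_{\tX}\cA=\tX^u$ by construction. The singularities of $\tX^u$ lie only over intersection points of $E$, and a local-toric computation at each node $E_v\cap E_w$ (using that the inertia group is cyclic---a consequence of the structure of $H=H_1(\Sigma,\Z)$ for a rational homology sphere link) identifies the local model as a cyclic quotient of $\C^2$. The morphism $\rho$ is obtained as (and equals) the Stein factorization of $\tX^u\to\tX\to X$. The identity $\pi_*\cA=\pi_*p_*\cO_{\tX^u}=q_*\rho_*\cO_{\tX^u}=q_*\cO_{X^u}$ (the last equality since $X^u$ is normal and $\rho$ is birational) gives $\specan_X\pi_*\cA\cong X^u$, and the eigensheaf statement for $q_*\cO_{X^u}$ follows by applying $\pi_*$ to the one for $p_*\cO_{\tX^u}$. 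The main obstacle I expect is the local-toric analysis showing that the inertia groups at nodes of $E$ really are cyclic (so that the quotient singularities are cyclic and not merely abelian), and the careful bookkeeping to match Chern classes $c_1(\cM_\chi)$ with the data $c_1(L_\chi)$ dictated by properties (1)--(3).
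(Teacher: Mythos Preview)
The paper does not give a proof of this proposition at all: it is stated with the citation ``cf.\ \cite[\S 3.2]{o.uac-rat}'' and used as input. So there is no proof in the paper to compare your proposal against.

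That said, your sketch is a reasonable outline of one direction of the argument, essentially the reverse of the construction in the cited reference (which builds $\cA$ first from the divisors $L_\chi$ and then identifies $\specan\cA$ with the universal abelian cover). Two points deserve care. First, the cover is $H$-Galois, not $\hat H$-Galois; the eigensheaf decomposition is indexed by $\hat H$. Second, your parenthetical ``the inertia group is cyclic'' is not quite correct and not what is needed. At a node $E_v\cap E_w$ the inertia subgroup $G=\langle e_v,e_w\rangle\subset H$ (with $e_v,e_w$ the images of the meridians) can be non-cyclic. The correct statement is that the normalized local cover is $\C^2/K$, where $K=\ker\bigl(\Z/n_v\times\Z/n_w\twoheadrightarrow G\bigr)$ acts diagonally; one checks that $K$ acts freely on $\C^2\setminus\{0\}$ (any $(a,0)\in K$ forces $n_v\mid a$), and a finite abelian subgroup of the diagonal torus acting freely off the origin injects into either $\C^*$-factor, hence is cyclic. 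So the singularity is a cyclic quotient even when the inertia group is not. The identification $\cM_\chi\cong\cL_\chi$ via the three properties listed before the proposition is indeed the crux; property~(1) comes from the ramification data as you say, but verifying (2) and (3) for the eigensheaves $\cM_\chi$ needs an explicit local computation, not just a Chern-class match.
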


Let $q_1\:(X_1,o) \to \X$ be an abelian covering with Galois
group $G_1$.
Then there exists a subgroup $H_1$ of $H$ such that
$X_1={X^u}/H_1$ and $G_1=H/H_1$.
Let $\tX_1=\tX^u/H_1$. 
We have the following commutative diagram:
$$
\xymatrix{
  \tX^u \ar[r] \ar[d]^{\rho}& \tX_1  \ar[r]^{p_1} \ar[d]^{\pi_1} & 
 \tX   \ar[d]^{\pi} \\
 {X^u} \ar[r] & X_1  \ar[r]_{q_1} & 
 X  
}
$$
 For any subgroup $V\subset H$, we define groups $V^{\bot}$
 and $V^{\flat}$ as follows:
$$
V^{\bot}=\defset{\chi \in \hat H}{\chi(V)=\{1\}}, \quad 
  V^{\flat}=\defset{h \in H}{V\cdot h=\{0\}}.
$$
We easily see the following:
\begin{enumerate}
 \item $ H/H_1^{\flat}\cong \Hom(H_1, \Q/\Z)\cong \hat
 H_1:=\Hom(H_1, \C^*)\cong H_1$.
 \item  $\fl{(\fl{H_1})}=H_1$.
\end{enumerate}

\begin{lem}
We have 
${p_1}_*\cO_{\tX_1}=\bigoplus _{\chi \in
 H_1^{\bot}}\cL_{\chi}$  and $G(X_1/X)\cong H_1^{\flat}$.
\end{lem}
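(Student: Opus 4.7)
The plan is to handle the two assertions separately, exploiting the eigensheaf decomposition of \proref{p:uac-rat} for the first and the duality pairing on $H$ for the second.

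\textbf{Step 1 (eigensheaf part).} By construction, $\tX_1 = \tX^u/H_1$, and from the commutative square in the setup the composition $p = p_1 \circ (\tX^u \to \tX_1)$ factors through $\tX_1$. Therefore
\[
(p_1)_*\cO_{\tX_1} \;=\; (p_1)_*\bigl((\tX^u\to\tX_1)_*\cO_{\tX^u}\bigr)^{H_1}
\;=\; \bigl(p_*\cO_{\tX^u}\bigr)^{H_1}
\;=\; \cA^{H_1}.
\]
By part (3) of \proref{p:uac-rat}, $\cA=\bigoplus_{\chi\in\hat H}\cL_\chi$ where $\cL_\chi$ is the $\chi$-eigensheaf; consequently $h\in H_1$ acts on $\cL_\chi$ by the scalar $\chi(h)$. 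Thus $\cL_\chi^{H_1}=\cL_\chi$ when $\chi|_{H_1}\equiv 1$ and vanishes otherwise. The first formula follows from the very definition of $H_1^{\bot}$.

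\textbf{Step 2 (Galois group).} The text already records $G(X_1/X)=H/H_1$, so it suffices to produce a canonical isomorphism $H/H_1\cong H_1^{\flat}$. I would feed the two numbered facts listed immediately before the lemma into each other: property (1) applied to the subgroup $H_1^{\flat}\subset H$ gives
\[
H/(H_1^{\flat})^{\flat} \;\cong\; \Hom(H_1^{\flat},\Q/\Z) \;\cong\; H_1^{\flat},
\]
and then property (2), $(H_1^{\flat})^{\flat}=H_1$, identifies the left-hand side with $H/H_1$. Combined with Step~1 this proves the lemma.

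\textbf{Anticipated difficulty.} Neither step requires new geometric input: Step~1 is a formal consequence of invariants commuting with pushforward in characteristic zero plus the eigensheaf description already handed down by \proref{p:uac-rat}, and Step~2 is pure linear algebra for the non-degenerate pairing $H\times H\to\Q/\Z$. The only point that really needs care is the compatibility of conventions: one must check that the $H$-action on $\cA$ making $\cL_\chi$ the $\chi$-eigensheaf matches the Galois action whose quotient is $\tX_1$, so that "$H_1$-invariants" on the algebra side truly computes $(p_1)_*\cO_{\tX_1}$. This is essentially bookkeeping built into \proref{p:uac-rat}, but it is the one place where an inattentive sign or an inversion of $\theta$ would be fatal.
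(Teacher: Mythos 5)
Your proposal is correct and follows essentially the same route as the paper: the paper's proof is exactly your Step 1 (a section $f\in H^0(\cL_\chi)$ satisfies $h\cdot f=\chi(h)f$, so the $H_1$-invariants of $\cA$ are $\bigoplus_{\chi\in H_1^{\bot}}\cL_\chi$), and your Step 2 just makes explicit the application of the two facts $H/V^{\flat}\cong V$ and $(H_1^{\flat})^{\flat}=H_1$ that the paper records immediately before the lemma and leaves implicit.
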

\begin{proof}
For any $h\in H$ and $f\in H^0(\cL_{\chi})$, we have
 $h\cdot f=\chi(h)f$.
Hence  $f$ is invariant under $H_1$-action if and only
 if $\chi(h)=1$ for all $h\in H_1$.
\end{proof}

\begin{rem}
 The geometric genus $p_g(X_1,o)$ of $(X_1,o)$ is expressed
 as 
$$
h^1(\cO_{\tX_1})=\sum _{\chi \in
 H_1^{\bot}}h^1(\cL_{\chi}).
$$
In case $X$ is a splice quotient, the invariant
 $h^1(\cL_{\chi})$ can be computed from $\Gamma$ (see \cite[4.5]{o.pg-splice}, or \cite[\S 2]{nem.coh-sq}), and thus
 $p_g(X_1,o)$ can be computed from $\Gamma$ and $H_1$.
\end{rem}

Let $E^1$ denote the exceptional set of $\tX_1$, and  $L^1$
the group of cycles supported in $E^1$.

\begin{defn}
If $D\in L^*$, $f\in
 H^0(\cO_{\tX}(-\sigma(D)))\setminus\{0\}$,
and $C:=\di(f)-\sigma (D)$ has no component of
 $E$, then we write $(f)_E=D$.
By the same manner, we define $(f)_{E^1}$ for $f\in
 H^0(\cO_{\tX_1}(-D))\setminus\{0\}$ with $D\in L^1$.
\end{defn}

For example, if $f\in H^0(\cO_{\tX})$ and $\di (f)=D+C$, where $D\in L$
and $C$ has no component of
 $E$, then $(f)_E=D$.
For $f\in H^0(\cL_{\chi})$ and  $g\in H^0(\cL_{\chi'})$
we simply denote by $fg\in H^0(\cL_{\chi\chi'})$ the product
in the algebra $H^0(\cA)$.
Then 
we have 
$(fg)_E=(f)_E+(g)_E$ (cf. \cite[2.1]{o.pg-splice}).

\begin{lem}\label{l:p_1^*zero}
Let  $\chi \in H_1^{\bot}$ and $f\in H^0(\cL_{\chi})$. 
Regarding $f$ as a regular
 function on $\tX_1$, we write it as $f_1$.
Then $(f_1)_{E^1}=p_1^*(f)_E$. 
In particular, $p_1^*D\in L^1$ for any $D\in
 \Theta^{-1}(\chi)$.
\end{lem}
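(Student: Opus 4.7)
The plan is to compute $\di_{\tX_1}(f_1)$ and match its exceptional and non-exceptional parts separately with $p_1^* D$ and $p_1^* C$, where $C := \di_{\tX}(f) - \sigma(D)$ is the non-exceptional remainder on $\tX$. The eigenspace inclusion $\cL_\chi \hookrightarrow (p_1)_* \cO_{\tX_1}$ has an adjoint $\beta \colon p_1^* \cL_\chi \to \cO_{\tX_1}$, which realizes $p_1^* \cL_\chi \cong \cO_{\tX_1}(-M')$ for some effective integer divisor $M'$ on $\tX_1$; for $\tau = p_1^* f$ one has
\[
\di_{\tX_1}(f_1) \;=\; \di_{\tX_1}\bigl(\beta(\tau)\bigr) \;=\; p_1^*\bigl(\di_{\tX}(f) - L_\chi\bigr) + M'.
\]

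To identify $M'$, I work locally near the generic point of a component $F$ of $E^1$ lying over $E_v$. Let $e$ be the ramification index of $p_1$ along $F$ and $w$ a local equation of $F$. The stabilizer $G_F$ in $G := G(X_1/X) = H/H_1$ is cyclic of order $e$, generated by the image $[h_v]$ of $h_v := [E_v^*]_H \in H$. The pairing yields
\[
\chi([h_v]) = \exp\bigl(2\pi\sqrt{-1}\,D\cdot E_v^*\bigr) = \exp\bigl(-2\pi\sqrt{-1}\,D_v\bigr), \qquad D_v := \mult_{E_v} D,
\]
so $eD_v \in \Z$; writing $\chi|_{G_F} = \eta^{k_v}$ for the faithful character $\eta$ of $G_F$ on $w$ and a unique $k_v \in \{0,\dots,e-1\}$, one obtains $k_v = e\{D_v\}$ (with the natural sign convention). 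A local Taylor expansion at a $G_F$-fixed point on $F$ shows that every $\chi$-eigenvector regular function on $\tX_1$ vanishes to order at least $k_v$ along $F$, with the local generator of the $\chi$-eigenspace attaining equality. Hence $\mult_F M' = k_v = \mult_F p_1^*\{D\}$ (where $\{D\} := D - [D]$); since both $M'$ and $p_1^*\{D\}$ are supported on $E^1$, it follows that $M' = p_1^*\{D\}$, and in particular this $\Q$-cycle is integer.

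Substituting $\di_{\tX}(f) = L_\chi + [D] + C$ and $M' = p_1^*\{D\}$ gives
\[
\di_{\tX_1}(f_1) \;=\; p_1^*[D] + p_1^* C + p_1^*\{D\} \;=\; p_1^* D + p_1^* C.
\]
Since $C$ has no $E$-component, $p_1^* C$ has no $E^1$-component, so the exceptional part of $\di_{\tX_1}(f_1)$ is $p_1^* D \in L^1$, giving $(f_1)_{E^1} = p_1^* D$. The ``in particular'' statement follows because $\Theta^{-1}(\chi)$ is a coset of $L$ in $L^*$ and $p_1^* L \subset L^1$.

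The main obstacle is the identification $\mult_F M' = e\{D_v\}$: it requires the character-theoretic computation of $\chi|_{G_F}$ via the pairing together with a careful local analysis of the $G_F$-action on $w$, keeping track of sign conventions.
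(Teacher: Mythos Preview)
Your argument is essentially correct, but it takes a much more laborious route than the paper's own proof. The paper simply observes that if $e=|H_1^{\bot}|$ then the $e$-th power $f^e$ in the algebra $\bigoplus_{\chi\in H_1^{\bot}}H^0(\cL_\chi)$ lies in $H^0(\cO_{\tX})$, so $f_1^e=p_1^*(f^e)$ as honest functions; since $(\cdot)_E$ is additive under products, this gives $e(f_1)_{E^1}=ep_1^*(f)_E$ in one line. Your approach instead unpacks the adjunction $p_1^*\cL_\chi\to\cO_{\tX_1}$ and computes its vanishing divisor $M'$ component by component via the inertia action, which is the standard local picture for abelian covers. This buys you an explicit identification $M'=p_1^*\{D\}$ (and hence integrality of $p_1^*\{D\}$), information the paper's trick never makes visible; but it costs you several auxiliary facts you use without proof, notably that the inertia group $G_F\subset H/H_1$ is generated by the image of $[E_v^*]$, and the precise matching of the sign in $\chi([h_v])=\exp(-2\pi i D_v)$ with the character of $G_F$ on the local parameter $w$. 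You flag the latter as ``the main obstacle'' but do not actually resolve it. Both points are true and can be extracted from the construction in \cite{o.uac-rat}, but if you keep this argument you should either cite a precise statement or supply the short local calculation; otherwise the proof reads as a sketch rather than a proof.
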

\begin{proof}
Let $f^e$ denote the $e$-th power of $f$ in
 the algebra $\bigoplus _{\chi \in H_1^{\bot}}H^0(\cL_{\chi})$,
 where  $e=|H_1^{\bot}|$.
Then $f^e\in H^0(\cO_{\tX})$, $e(f)_E\in L$,
 and $f_1^e=p_1^*(f^e)$. Hence 
 $e(f_1)_{E^1}=ep_1^*(f)_E$.
\end{proof}

\section{The maximal ideal cycle and the multiplicity}\label{s:max-mult}

For any set $\cD$ of effective $\Q$-divisors on a normal variety
$W$, we denote by $\min \cD$ the set
of minimal elements of $\cD$ with respect to the order $\ge$
and by $\gcd \cD$ the maximal $\Q$-divisor on $W$ (if it exists) such that
$\gcd \cD\le D$ for every $D\in \cD$.

\begin{defn}
Let $\m_X$ denote the  maximal ideal of $\cO_{X,o}$.
 The {\itshape maximal ideal cycle} on $\tX$ is defined to be the
 effective cycle 
$$
\gcd\defset{(\pi^*f)_E}{f\in \m_X\setminus \{0\}}.
$$
Note that the maximal ideal cycle can be defined on any
 partial resolution.
\end{defn}

We shall compute the multiplicity of the singularity applying the
following (cf. \cite[2.7]{wag.ell}, \cite[4.6]{chap}).

\begin{thm}\label{t:mult}
 Let $Z$ be the maximal ideal cycle on $\tX$.
If $\cO_{\tX}(-Z)$ is generated by its global sections at
 every point of $E$, then the multiplicity $\mult\X$ of $\X$ is $-Z\cdot Z$.
\end{thm}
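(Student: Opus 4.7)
The plan is to compute $\mult\X$ via a reduction of $\m_X$ by two general elements, and to express the resulting colength as an intersection number on $\tX$. The first step is to upgrade the global-generation hypothesis to the sheaf-theoretic identity $\pi^{-1}(\m_X)\cdot\cO_{\tX}=\cO_{\tX}(-Z)$. The inclusion $\subseteq$ is immediate from the definition of the maximal ideal cycle. For the reverse, every $s\in H^0(\tX,\cO_{\tX}(-Z))$ is a bounded holomorphic function on $\tX\setminus E=X\setminus\{o\}$ vanishing on $E$, so by normality it descends to an element of $\m_X$; thus $H^0(\cO_{\tX}(-Z))\subseteq\m_X$, and global generation on $E$ then promotes $\subseteq$ to $=$ at every point of $\tX$.

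Next I would pick two sufficiently general $f_1,f_2\in\m_X$ and write $\pi^*f_i=Z+D_{f_i}$ with $D_{f_i}$ having no exceptional component. Using base-point-freeness of $\cO_{\tX}(-Z)$ on the compact set $E$, a Bertini argument lets me arrange that $D_{f_1}$ and $D_{f_2}$ are disjoint on $E$; generality also ensures that $V(f_1,f_2)=\{o\}$ in a neighborhood of $o$ (so $D_{f_1}\cap D_{f_2}=\emptyset$ above $o$) and that $(f_1,f_2)$ is a reduction of $\m_X$. The reduction property uses that the analytic spread of $\m_X$ equals $2$: since $|\cO_{\tX}(-Z)|$ is base-point-free and some $E_v$ satisfies $-Z\cdot E_v>0$, the associated morphism $\tX\to\P^N$ sends $E$ to a curve, giving a $1$-dimensional exceptional fiber over $o$ in the blowup of $\m_X$.

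By Cohen--Macaulayness of $\cO_{X,o}$ and standard reduction theory,
\[
\mult\X=e(\m_X)=e((f_1,f_2))=\dim_{\C}\cO_{X,o}/(f_1,f_2)=((f_1)\cdot(f_2))_o,
\]
and I can compute the intersection multiplicity on $\tX$ using the pullbacks:
\[
\pi^*(f_1)\cdot\pi^*(f_2)=(Z+D_{f_1})(Z+D_{f_2})=Z^2+Z\cdot D_{f_1}+Z\cdot D_{f_2}+D_{f_1}\cdot D_{f_2}.
\]
Since $\pi^*(f_i)\cdot E_v=0$ for every $v\in\cV$, we obtain $D_{f_i}\cdot E_v=-Z\cdot E_v$ and therefore $Z\cdot D_{f_i}=-Z^2$; and $D_{f_1}\cdot D_{f_2}=0$ by the chosen general position. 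The total is $-Z\cdot Z$, giving the claim.

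The main obstacle I anticipate is the simultaneous general-position selection of $f_1,f_2$ — disjoint residual divisors on $E$, $\m_X$-primary property, and reduction of $\m_X$ — each of which follows from base-point-freeness of $\cO_{\tX}(-Z)$ but which must be combined carefully in the analytic-germ setting; the cited works of Wagreich and Laufer presumably formalize this and allow one to reduce the problem to the purely numerical computation above.
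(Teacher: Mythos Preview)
Your argument is essentially correct and is the classical one: identify $\pi^{-1}(\m_X)\cdot\cO_{\tX}$ with $\cO_{\tX}(-Z)$ via global generation, pass to a two-element reduction, and compute the colength as an intersection number on the resolution. The one cosmetic slip is that the paper's references are Wagreich \cite{wag.ell} and Reid's \emph{Chapters on algebraic surfaces} \cite{chap}, not Laufer.

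There is, however, nothing to compare against: the paper does not supply its own proof of \thmref{t:mult}. The theorem is stated with the citation ``(cf.\ \cite[2.7]{wag.ell}, \cite[4.6]{chap})'' and then used as a black box in the proof of \proref{p:mult-gen}. So your write-up is not an alternative to the paper's argument but rather a reconstruction of the cited classical result. What you have sketched is precisely the content of those references (Reid's treatment in particular runs exactly along the lines you describe: base-point-freeness gives a reduction by two general functions, and the multiplicity is then $-Z^2$ via the projection formula). If anything, your identity $\pi^*(f_1)\cdot\pi^*(f_2)=((f_1)\cdot(f_2))_o$ deserves one more word of justification, since the two pullbacks share the exceptional support $Z$; the clean way is to note that $\pi^*(f_1)\cdot Z=0$ (principal divisor against a compactly supported cycle) and then $\pi^*(f_1)\cdot D_{f_2}=((f_1)\cdot(f_2))_o$ by the projection formula, which recovers your $-Z^2$.
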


Assume that  $\cD$ is a subset of $L\otimes \Q$ and
  $\gcd\cD$ exists.

To express the maximal ideal cycle on the resolution space of
an abelian cover of $X$ in terms of the $\gcd$ of elements of
$L\otimes \Q$, we introduce a condition on $\cD$.
 For any open subset $U \subset \tX$, let 
 $\cD_U=\defset{D|_U}{D \in \cD}$.

\begin{defn}\label{d:GCD}
We say that $\cD$ satisfies the {\itshape GCD condition} 
at a point $x\in E$ if 
 there exists a neighborhood $U \subset \tX$ of $x$ such
 that $\min \cD_U=\{\gcd\cD_U\}$, i.e., $\gcd \cD_U\in
 \cD_U$.
We simply say that  $\cD$ satisfies the GCD condition
if  $\cD$ satisfies the GCD condition at every point $x \in E$.
\end{defn}

Clearly the condition that $\gcd \cD_U\in \cD_U$ is always
satisfied on a neighborhood of any nonsingular point of
$E$.

 For any surjective morphism $\varphi\:W\to\tX$, let
 $\varphi^*\cD=\defset{\varphi^*D}{D \in \cD}$.

\begin{ex}
Suppose that $E_1\cdot E_2=1$.
Let $D_1=2E_1+3E_2$,  $D_2=5E_1+E_2$, and 
$\cD=\{D_1, D_2\}$. 
Then $\gcd \cD=2E_1+E_2$ and $\cD$ does not satisfy the GCD condition.
Let $\varphi \: W
 \to \tX$ be the blowing-up at $E_1\cap E_2$ and $C\subset W$
 the exceptional curve.
Then
$$
 \gcd\varphi^*\cD=\varphi^*\gcd\cD+2C.
$$
\end{ex}

\begin{lem}\label{l:*gcd}
Let $\varphi\:W\to\tX$ be any surjective morphism such that
 $W$ is also a partial resolution of a surface singularity
 with exceptional set $\varphi ^{-1}(E)$.
 Assume that $\cD$ satisfies the GCD condition.
Then $\varphi^*\cD$ also satisfies the GCD condition and
 $\varphi^*\gcd\cD=\gcd\varphi^*\cD$.
\end{lem}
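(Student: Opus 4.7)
The plan is to verify both claims locally around each point $y \in \varphi^{-1}(E)$, transporting the GCD condition from $x := \varphi(y) \in E$ by pullback. The underlying mechanical fact I would invoke throughout is that $\varphi^*$ sends effective $\Q$-divisors supported on $E$ to effective $\Q$-divisors supported on $\varphi^{-1}(E)$; consequently it preserves the coefficient-wise order $\le$ and commutes with restriction to open subsets.

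First I would fix $y$ and apply the GCD condition at $x$ to obtain a neighborhood $U \subset \tX$ of $x$ together with some $D_0 \in \cD$ satisfying $D_0|_U = \gcd \cD_U = (\gcd \cD)|_U$. Setting $V = \varphi^{-1}(U)$, the inequality $D_0|_U \le D|_U$ for every $D \in \cD$ pulls back to $\varphi^* D_0|_V \le \varphi^* D|_V$. Hence $\varphi^* D_0|_V$ is a lower bound for $(\varphi^*\cD)_V$ that itself lies in $(\varphi^*\cD)_V$, so it is the minimum and equals $\gcd (\varphi^*\cD)_V$. This establishes the GCD condition for $\varphi^*\cD$ at $y$.

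The same chain of identities simultaneously yields the desired equality on $V$:
$$(\varphi^* \gcd \cD)|_V = \varphi^*\bigl((\gcd\cD)|_U\bigr) = \varphi^*(D_0|_U) = (\varphi^* D_0)|_V = \gcd(\varphi^*\cD)_V.$$
Since such neighborhoods $V$ cover $\varphi^{-1}(E)$ and both sides are supported on $\varphi^{-1}(E)$, the global identity $\varphi^* \gcd \cD = \gcd \varphi^*\cD$ follows.

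I do not foresee a serious obstacle: the only care needed is to confirm that pullback of $\Q$-divisors along the surjective morphism $\varphi$ of normal surfaces truly preserves effectiveness, order, and restriction, which is routine once one notes that $\varphi^{-1}(E)$ is the full exceptional locus of $W$ over the singular point.
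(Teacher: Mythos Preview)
Your argument is correct and follows essentially the same local strategy as the paper: pick the neighborhood $U$ witnessing the GCD condition at $x$, pull back to $\varphi^{-1}(U)$, and use order-preservation of $\varphi^*$ to transport the locally-attained minimum. The paper's proof is simply a terser rendering of the same idea, phrased as $\min \varphi_U^*\cD_U=\varphi_U^*\min \cD_U$ without spelling out the gluing step you made explicit.
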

\begin{proof}
Let $U$ be as in \defref{d:GCD} and $\varphi_U=\varphi
 |_{\varphi ^{-1}(U)}$. Then we see that
$\min \varphi_U^*\cD_U=\varphi_U^*\min \cD_U$ and 
 $(\gcd\varphi^*\cD)|_{\varphi^{-1}(U)}=\varphi_U^*\gcd\cD|_U$. 
\end{proof}

\begin{lem}\label{l:to-gcd}
Assume that $\min \cD$ is a finite set. 
Then there exists a birational morphism $\varphi\:W\to\tX$
such that $\varphi^*\cD$ satisfies the GCD condition.
In fact,  $\varphi$ is obtained by a finite succession of blowing-ups at
       singular points of exceptional sets where the
       GCD condition is not satisfied. 
\end{lem}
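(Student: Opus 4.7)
The plan is to reduce to a local, toric problem at each node of the exceptional set and then run an Euclidean-algorithm-style descent. First, the GCD condition holds automatically at every smooth point of $E$, so I need only inspect the finitely many nodes $x=E_v\cap E_w$. Since $\min\cD$ is finite and $\gcd\cD_U=\gcd(\min\cD)_U$ for every open $U$, it suffices to test the condition using divisors in the finite set $\min\cD$. Fix a common denominator $m\in\N$ so that every coefficient of every $D\in\min\cD$ lies in $\frac{1}{m}\Z$; this integrality is preserved under pullback along all blowups appearing in the construction.

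At a failing node $x$, I would choose local coordinates with $E_v\cap U=\{u=0\}$ and $E_w\cap U=\{v=0\}$ and write each $D\in\min\cD$ locally as $a(D)E_v+b(D)E_w$. Blowing up $x$ produces $\varphi\:\tX'\to\tX$ with new exceptional curve $C$, and the standard toric computation gives $\varphi^*D=a(D)\tilde E_v+b(D)\tilde E_w+(a(D)+b(D))C$. Because $\varphi^*$ is $\Q$-linear and preserves the order on effective divisors, $\min\varphi^*\cD=\varphi^*(\min\cD)$, so the inductive setup survives unchanged. A case analysis at the two new nodes $\tilde E_v\cap C$ and $C\cap\tilde E_w$ shows that for any previously incomparable pair with local gap $(\alpha,\beta)\in(\frac{1}{m}\Z_{>0})^2$, exactly one of the new nodes inherits the incomparability when $\alpha\neq\beta$ (neither when $\alpha=\beta$), and there the replaced gap is $(\alpha-\beta,\beta)$ or $(\alpha,\beta-\alpha)$ --- in every case with strictly smaller sum.

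Define the total obstruction $\Omega$ to be the sum $\sum(\alpha+\beta)$ taken over all incomparable pairs in $\min$ data at all nodes; it lies in the well-ordered discrete set $\frac{1}{m}\Z_{\ge 0}$. Each blowup at a bad node strictly decreases $\Omega$: the map $\varphi$ is an isomorphism away from $x$, so pairs at other nodes are unchanged; by \lemref{l:*gcd} pullback preserves the GCD property at already-satisfied nodes; and the Euclidean step strictly shrinks the contribution from the blown-up node. Consequently finitely many blowups drive $\Omega$ to $0$, producing the desired $W\to\tX$ as a composition of blowups, all of which are performed at bad nodes.

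The main obstacle is the local Euclidean step itself: carrying out the case analysis verifying (i) that at most one of the two new nodes remains bad and (ii) that the pair-wise gap shrinks as claimed, and simultaneously checking that interactions among multiple pairs at the same node do not spoil the strict decrease of $\Omega$. Once this local calculation is in hand, the global termination is essentially a discreteness argument, since blowups at distinct bad nodes do not interact.
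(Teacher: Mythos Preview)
Your approach is correct and is essentially a detailed reconstruction of the argument the paper defers to a citation. The paper's proof of \lemref{l:to-gcd} simply invokes \cite[6.5]{o.uac-rat}, which proves (for finite $\cD$) that a finite succession of blowups makes $\cD$ \emph{locally ordered} (every pair $\{D_1,D_2\}$ satisfies the GCD condition), and then applies that result to $\min\cD$. Your Euclidean-descent on the obstruction $\Omega=\sum(\alpha+\beta)$ is precisely the kind of argument underlying that cited result: the local computation $(\alpha,\beta)\mapsto(\alpha-\beta,\beta)$ or $(\alpha,\beta-\alpha)$ is exactly the Euclidean step, and the discreteness of $\frac{1}{m}\Z_{\ge 0}$ forces termination. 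One small remark: your invariant $\Omega$ measures pairwise incomparability (the ``locally ordered'' condition), which is strictly stronger than the GCD condition---e.g.\ $\{(1,3),(3,1),(1,1)\}$ satisfies GCD but has $\Omega>0$---so your algorithm, which blows up only at GCD-bad nodes, may halt with $\Omega>0$; this is harmless since the halting criterion is exactly what you need. Your citation of \lemref{l:*gcd} for preservation at good nodes is slightly off (that lemma is global), but the actual fact you need---that $\varphi$ is an isomorphism near any node $y\neq x$, so local data there is unchanged---is trivial.
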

\begin{proof}
In \cite[6.4]{o.uac-rat}, $\cD$ is said to be locally
 ordered if $\{D_1, D_2\}$  satisfies the
 GCD condition for  arbitrary $D_1, D_2\in \cD$.
By \cite[6.5]{o.uac-rat}, if $\cD$
 is finite then there exists a morphism $\varphi\:W\to\tX$ with  $\varphi^*\cD$ being locally ordered.
In fact, $\varphi$ is obtained by a finite succession of
 blowing-ups at singular points of exceptional sets where
 the property being  locally ordered is not satisfied.
Applying this argument to the set $\min \cD$, we obtain the assertion.
\end{proof}

We identify the local ring $\cO_{X_1,o}$ with 
$\pi_*\left(\bigoplus _{\chi \in
H_1^{\bot}}\cL_{\chi}\right)_o
$,
and let $\frak m_1$ denote the maximal ideal of  $\cO_{X_1,o}$.
Suppose that $x_1, \dots , x_m \in \frak m_1$ generate
$\frak m_1$ and $x_i \in
\left(\pi_*\cL_{\chi_i}\right)_o$ for $i=1, \dots, m$.
Let $\cD$ denote the set of  $D_i:=(\pi^*x_i)_{E}$, $i=1,
\dots, m$. Note that $\Theta (D_i)=\chi_i$.
Let $\varphi$ denote the composite
 of any resolution  $\bar X_1\to \tX_1$ of the singularities
 of $\tX_1$ and the morphism $p_1\: \tX_1\to \tX$, and $Z^1$ the
 maximal ideal cycle on $\bar X_1$. 
By \lemref{l:p_1^*zero}, we have $Z^1=\gcd\{\varphi^*D_1,
\dots, \varphi^* D_m\}$. 
Let $Z=\gcd\cD$.

\begin{prop}\label{p:mult-gen}
 Assume that for every point $x\in E$ there exist a
 neighborhood $U\subset \tX$ of $x$ and   $D_i\in \cD$
 such that $Z|_U=D_i|_U$ and  
$\cO_{\tX}(-\sigma(D_i))$ is generated by its global sections at
 every point of $U\cap E$.
Then $\mult(X_1,o)=-|H/H_1|Z\cdot Z$.
\end{prop}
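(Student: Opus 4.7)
The plan is to verify the two hypotheses of \thmref{t:mult} on $\bar X_1$ --- (i) that the maximal ideal cycle $Z^1$ equals $\varphi^* Z$, and (ii) that $\cO_{\bar X_1}(-Z^1)$ is globally generated at every point of the exceptional set --- and then translate the resulting intersection number back to $\tX$ via the projection formula, using $\deg \varphi = |H/H_1|$.

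Step (i) is a brief application of \lemref{l:*gcd}: the hypothesis of the proposition is exactly the GCD condition for $\cD$ at every point of $E$, so $\gcd \varphi^*\cD = \varphi^* \gcd \cD = \varphi^* Z$; combining with the identity $Z^1 = \gcd\{\varphi^*D_1,\dots,\varphi^*D_m\}$ already noted before the proposition (a consequence of \lemref{l:p_1^*zero}) gives $Z^1 = \varphi^* Z$. Step (ii), which I expect to be the main obstacle, goes as follows. For a point $\bar x \in \bar X_1$ lying over $x \in U \cap E$, the assumed global generation of $\cO_{\tX}(-\sigma(D_i))$ at $x$ supplies a section $y \in H^0(\cO_{\tX}(-\sigma(D_i))) \subset H^0(\cL_{\chi_i})$ whose divisor on $\tX$ has the form $\sigma(D_i) + F$, with $F$ effective, containing no component of $E$, and not passing through $x$. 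Via the $\cO_{\tX}$-algebra structure of $\cA$ in \proref{p:uac-rat}, the section $y$ is naturally a regular function on $\tX_1$ in the $\chi_i$-eigenspace; \lemref{l:p_1^*zero} identifies the exceptional part of its divisor on $\bar X_1$ with $\varphi^* D_i$, which matches $Z^1$ near $\bar x$ by Step (i), while the pulled-back non-exceptional part still avoids $\bar x$. Since $y$ vanishes along the exceptional set, it lies in $\m_1$ and is therefore a section of $\cO_{\bar X_1}(-Z^1)$ generating the stalk at $\bar x$.

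With both hypotheses verified, \thmref{t:mult} gives $\mult(X_1,o) = -Z^1 \cdot Z^1$, and the projection formula together with $\deg \varphi = |G(X_1/X)| = |H/H_1|$ yields $Z^1 \cdot Z^1 = |H/H_1|\, Z\cdot Z$, completing the computation. The delicate point is really Step (ii): the generation hypothesis is phrased on $\tX$ for an invertible sheaf, whereas the conclusion demands generation of a different sheaf on $\bar X_1$ by elements of $\m_1$ --- bridging the two requires \proref{p:uac-rat} together with the exceptional-divisor tracking of \lemref{l:p_1^*zero}.
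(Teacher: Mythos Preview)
Your proof is correct and follows essentially the same approach as the paper: both establish $Z^1=\varphi^*Z$ via \lemref{l:*gcd} and the GCD condition, then verify global generation of $\cO_{\bar X_1}(-Z^1)$ locally, and conclude via \thmref{t:mult} together with $Z^1\cdot Z^1=(\deg\varphi)\,Z\cdot Z$. The only cosmetic difference is that in Step~(ii) the paper invokes the sheaf isomorphism $\cO_{\bar X_1}(-\varphi^*D_i)\cong\varphi^*\cO_{\tX}(-\sigma(D_i))$ and the preservation of global generation under pullback, whereas you track an explicit generating section through \lemref{l:p_1^*zero}; these are equivalent formulations of the same argument.
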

\begin{proof}
Since $\cD$ satisfies the GCD condition,  $Z^1=\psi^*Z$ by
 \lemref{l:*gcd}.
Let $U$ and $D_i$ be as in the claim.
Then $\cO_{\bar X_1}(-\varphi^*D_i) \cong
 \varphi ^*\cO_{\tX}(-\sigma(D_i))$ and it is generated by global
 sections at every point of $\varphi^{-1}(U\cap E)$.
Since $H^0(\cO_{\bar X_1}(-\varphi^*D_i)) \subset
 H^0(\cO_{\bar X_1}(-Z^1))$ and $\cO_{U}(-\varphi^*D_i)
 = \cO_{U}(-Z^1)$, it follows that  
 $\cO_{\bar X_1}(-Z^1)$  is generated by global
 sections at  every point of $E$.
Hence the  assertion follows from \thmref{t:mult} and the
 formula $Z^1\cdot Z^1=(\deg \varphi)Z\cdot Z$. 
\end{proof}

\section{The splice quotients}\label{s:SQ}

In this section we give a brief introduction of splice
quotients (see \cite{nw-CIuac}, \cite{nw-HSL},
\cite{o.pg-splice} for more details) and discuss generators
of ideals of type $H^0(\cO_{\tX}(-\sigma(A)))$.

Let $\Gamma$ denote the weighted
dual graph of $E$.
Since $E$ is a tree of rational curves, $\Gamma$ and
 the intersection matrix $I(E)$ have the same information.
Let $\delta_v=(E-E_v)\cdot E_v$.
An irreducible component  $E_v$ (or a suffix $v$) is called
an {\itshape end} (resp. a {\itshape node}) if $\delta_v=1$
(resp. $\delta_v\ge 3$). 
Let $\cE$ (resp. $\cN$) denote the set of indices of ends
(resp. nodes).
A connected component of $E-E_v$ is called a {\itshape branch} of $E_v$.

\begin{defn}
Let $\cM=\sum _{w \in \cE}\Z_{\ge 0}E^*_w$,
 where $\Z_{\ge 0}$ is the set of nonnegative integers. We
 call an element of $\cM$  a {\itshape monomial cycle}.
For a monomial cycle $D=\sum _{i \in \cE}a_iE_i^*$, we
 associate a monomial  $z(D):=\prod_{i\in \cE} z_i^{a_i}$ of 
the power series ring $\C\{z\}:=\C\{z_i ; i \in \cE\}$.
\end{defn}

Let $p\: \tX^u \to \tX$ be the finite morphism in \proref{p:uac-rat}.

\begin{defn}
 For any $v \in \cV$, let $F_v=p^{-1}(E_v)$ and 
 define  the  {\itshape $v$-degree} of a monomial $z(D)$ to be the
 coefficient of $F_v$ in $p^*D$.
Let $f=f_0+f_1 \in \C\{z\}$, where $f_0$ is a nonzero
 quasihomogeneous polynomial with respect to  
 the $v$-degree and  $f_1$ is a series in monomials of
 $v$-degrees greater than the $v$-degree of $f_0$. 
Then we call $f_0$ the {\itshape $v$-leading form} of $f$, and 
 denote it by $\LF_{v}(f)$.
The  $v$-degree of $f_0$ is called the {\itshape $v$-order} of $f$.
\end{defn}

\begin{defn}\label{c:A}
We say that $\Gamma$ satisfies the {\itshape monomial condition} if for  any
 branch $C$ of any node $E_v$, 
 there exists a monomial cycle $D$
 such that $D-E^*_v$ is an effective integral cycle
 supported on $C$.
The monomial $z(D)$ is called an {\itshape admissible
 monomial} belonging to  $C$. 
\end{defn}

\begin{defn}\label{d:N-Wsystem}
 Assume that the monomial condition is satisfied.
Let $E_v$ be a node with branches $C_1, \dots , C_{\delta_v}$,
and let  $m_i$ denote an admissible monomial belonging to
 $C_i$.
Let $(c_{ij})$  
be an arbitrary $(\delta_v-2) \times  \delta_v$ matrix with 
 $c_{ij} \in \C$ such  
 that every maximal minor of it has rank $\delta_v-2$.
We define polynomials $f_1, \ldots ,f_{\delta_v-2}$ by $f_i=\sum
 _{j=1}^{\delta_v}c_{ij}m_j$.  
Let $\cF_v=\{f_1 , \dots ,f_{\delta_v-2}\}$.
We call the set $\bigcup _{v \in \cN}\cF_v$ a {\itshape Neumann--Wahl system} associated
 with $\Gamma$.
\end{defn}

The group  $H$ acts on  $\C\{z\}$ as follows.
 For any monomial cycle $D$ and any element $h \in H$, we
 define
$$
h \cdot z(D)=(\Theta(D)(h))z(D).
$$
 Note that the set $\cF_v$ consists of $\Theta(E_v^*)$-eigenfunctions.

\begin{defn}[see {\cite[\S 7]{nw-CIuac}}]\label{d:sq}
Suppose that a set 
$$
 \cF:=\defset{f_{vj_v}}{v \in \cN, \; j_v=1, \dots  ,\delta_v-2} \subset \C\{z\}
$$ 
satisfies the following:
\begin{itemize}
 \item the set
 $\bigcup _{v \in \cN}\defset{\LF_v(f_{vj_v})}{j_v=1, \dots  ,\delta_v-2}$
is a Neumann--Wahl system  associated with $\Gamma$ 
(in this case, $\cF$ is called a system of splice diagram functions);
 \item every $f_{vj_v}$ is a $\Theta(E_v^*)$-eigenfunction. 
\end{itemize}
Then $Y:=\defset{z=(z_i)\in \C^{\#\cE}}{f(z)=0,
 \;  f \in \cF}$ is an isolated  complete intersection singularity having the action of $H$, and 
the quotient $Y/H$ is a normal surface singularity.
The singularity $(Y/H,o)$ is called  a {\itshape  splice-quotient singularity}. 
\end{defn}

Next we describe a characterization of splice quotients.

\begin{defn}\label{d:ecc}
 We say that $\tX$ satisfies the {\itshape end curve condition} if  for
 each $i\in \cE$, there exists $x_i\in 
 H^0(\cO_{\tX}(-\sigma(E_i^*)))$  such that $(x_i)_E=E_i^*$.
We call  $C_i:=\di (x_i)-\sigma(E_i^*)$ an {\itshape end
 curve} at $E_i$.
Note that  $C_i\cdot E=C_i\cdot E_i=1$.
\end{defn}

\begin{rem}
If $\tX$ satisfies the end curve condition,  so do the
 minimal good resolution and  
 any resolution obtained by the
 blowing-up of $\tX$ at singular points of $E$ or  points of
the intersection of  $E$ and end curves.
It is easy to see that  $X$ has a good resolution satisfying  the end curve condition if and only if its minimal good resolution
 satisfies the condition.
\end{rem}

If $\tX$ satisfies the end curve condition, then we obtain an 
 $H$-equivariant homomorphism of $\C$-algebras  
$$
 \psi\: \C\{z\} \to \cO_{{X^u},o}, \quad \psi
 (z_i)=x_i.
$$
The following theorem is due to Neumann--Wahl \cite{nw-ECT}
 (cf. \cite{o.ECT}, \cite{braun-2008}).
\begin{thm}[End Curve Theorem]
The singularity $\X$ is a splice quotient if and only if
 there exists a good resolution  satisfying the end curve condition.
In this case, the homomorphism $\psi$ is surjective and its kernel
 is generated by a system of splice diagram functions. 
\end{thm}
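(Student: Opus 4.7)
The plan is to prove the two directions separately, devoting most of the effort to the reverse implication. For the easy ``only if'' direction, assume $\X$ is a splice quotient, so $X^u \subset \C^{\#\cE}$ is cut out by a set $\cF$ as in \defref{d:sq} and $X = X^u/H$. For each end $i \in \cE$ the coordinate $z_i$ on $\C^{\#\cE}$ restricts to a $\Theta(E_i^*)$-eigenfunction on $X^u$ and hence descends via \proref{p:uac-rat} to a section $x_i$ of $\cL_{\Theta(E_i^*)}$. A direct toric computation on the natural resolution of $X^u$ coming from the splice diagram shows that $(x_i)_E = E_i^*$ and that $\di(x_i) - \sigma(E_i^*)$ is a smooth curve meeting $E$ transversally at $E_i$; this is precisely the end curve condition.

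For the reverse direction, suppose $\tX$ satisfies the end curve condition with end curves $x_i$, and form $\psi \: \C\{z\} \to \cO_{X^u,o}$ by $z_i \mapsto x_i$. The first step is to prove that $\psi$ is surjective. The eigenspace decomposition $\cO_{X^u,o} = \bigoplus_{\chi \in \hat H}(\pi_*\cL_\chi)_o$ combined with the $v$-order filtration at each node $v \in \cN$ splits $\cO_{X^u,o}$ into bigraded pieces whose dimensions can be read off from the resolution geometry. Because the monomial condition ensures that every character of $H$ is realized by some monomial cycle, $\psi$ hits the bottom of every bigraded piece, and a Nakayama-style induction on $v$-orders at the various nodes then upgrades this to surjectivity of $\psi$.

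Next I would exhibit generators of $\ker\psi$. Fix a node $v$ with branches $C_1,\dots,C_{\delta_v}$ and for each $j$ choose an admissible monomial $m_j = z(D_j)$ with $D_j - E_v^* \ge 0$ supported on $C_j$. All the images $\psi(m_j)$ lie in the single eigenspace $H^0(\cL_{\Theta(E_v^*)})$ and share the same lowest $v$-order; their leading forms restrict to sections of a common degree-one line bundle on $F_v \cong \P^1$ and therefore span only a two-dimensional space. Choosing a matrix $(c_{ij})$ as in \defref{d:N-Wsystem} yields $\delta_v-2$ combinations whose $v$-leading forms vanish under $\psi$, and an iterative ``correction'' by higher-$v$-order tails converts them into genuine elements $f_{vj_v}$ of $\ker\psi$ whose leading forms assemble into a Neumann--Wahl system associated with $\Gamma$.

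The final and hardest step is to show that the ideal $J$ generated by the $f_{vj_v}$ equals $\ker\psi$. My approach is to compare the bigraded dimensions of $\C\{z\}/J$ and $\cO_{X^u,o}$ character by character and $v$-order by $v$-order. On one hand, $\C\{z\}/J$ is by construction the local ring of a splice quotient in the sense of \defref{d:sq}, whose bigraded Hilbert function is computable from $\Gamma$. On the other hand, the rational homology sphere hypothesis together with vanishing statements for $h^1(\cL_\chi)$ arising from the negative-definiteness of $I(E)$ pins down the corresponding dimensions of $\cO_{X^u,o}$ in terms of the same combinatorial data. The main obstacle is precisely this numerical matching: once the two sides agree in every bigraded piece, the surjection $\C\{z\}/J \twoheadrightarrow \cO_{X^u,o}$ induced by $\psi$ must be an isomorphism, yielding $J = \ker\psi$ and identifying $\X$ as a splice quotient.
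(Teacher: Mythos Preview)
The paper does not prove this theorem at all. It is stated as a quoted result, attributed explicitly to Neumann--Wahl \cite{nw-ECT} (with alternative proofs cited in \cite{o.ECT} and \cite{braun-2008}), and immediately afterwards the paper simply \emph{assumes} the end curve condition and uses the theorem as a black box. So there is no ``paper's own proof'' to compare your proposal against.

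Your sketch is a plausible high-level outline of the strategy in the cited references, but several of the steps you describe as routine are in fact the substance of those papers. In particular: the ``direct toric computation'' you invoke for the only-if direction presupposes exactly the compatibility between the splice-diagram resolution and the end-curve divisors that needs to be established; the Nakayama-style induction for surjectivity of $\psi$ requires first knowing that the $v$-filtration agrees with the divisorial filtration (this is \proref{p:filt} in the present paper, itself proved using the reducedness of the associated graded from \lemref{l:G}); and the final ``numerical matching'' of bigraded Hilbert functions is not a matter of reading off $h^1(\cL_\chi)$ from negative-definiteness---those cohomology groups need not vanish, and the actual arguments in \cite{nw-ECT} and \cite{o.ECT} proceed instead by a careful induction on the graph (cutting at a node and controlling how the filtrations restrict) rather than a global dimension count. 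If you want to supply a proof here, you should either cite the references as the paper does, or commit to reproducing one of those inductive arguments in detail.
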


We assume that $\tX$ satisfies the end curve condition and use the
notation of \defref{d:sq}; thus
$(X^u,o)=\{f=0, f\in \cF\}\subset (\C^{\#\cE},o)$.

For each $w\in \cV$, we define  the {\itshape $w$-filtration}
$\{I_n^w\}_{n\ge 0}$
of $\cO_{X^u,o}$ by 
$$
 I_n^w=\defset{\psi (f)}{\vord{w}(f)\ge n}.
$$
Let $G(w)$ denote the associated graded ring $\bigoplus_{n\ge
0}I^w_n/I^w_{n+1}$. By the definition of $w$-degree,
 $I^w_n\subset \left(\rho_*\cO_{\t
 X^u}(-nF_w)\right)_o$. We will show that these filtrations
 are the same. 

\begin{lem}[cf. {\cite[2.6]{nw-CIuac}}]\label{l:G}
 For every $w\in \cV$, the ring $G(w)$ is  a reduced complete
 intersection ring isomorphic to  $\C[z]/ I^w$, where $I^w$
 is the ideal generated by 
$\LF_w\cF:=\defset{\LF_w(f)}{f \in \cF}$.
\end{lem}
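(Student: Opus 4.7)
The plan is to construct a graded $\C$-algebra homomorphism $\Phi\colon \C[z]\to G(w)$, show it factors through a surjection $\bar\Phi\colon \C[z]/I^w\twoheadrightarrow G(w)$, and then prove $\bar\Phi$ is an isomorphism whose source is a reduced complete intersection. Grade $\C[z]$ by $w$-degree, assigning $z_i$ the degree equal to the coefficient of $F_w$ in $p^*E_i^*$, so that every monomial $z(D)$ sits in its $w$-degree. Define $\Phi$ by $z_i\mapsto [x_i]$, the class of $x_i=\psi(z_i)$ in the $\vord{w}(z_i)$-graded piece of $G(w)$. Surjectivity of $\Phi$ is immediate from that of $\psi$: any element of $I^w_n$ has the form $\psi(g)$ with $\vord{w}(g)\ge n$, and modulo $I^w_{n+1}$ it coincides with $\psi(\LF_w(g))=\Phi(\LF_w(g))$.

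For $I^w\subset\ker\Phi$, take $f\in\cF$ and decompose $f=\LF_w(f)+r$ with $\vord{w}(r)>\vord{w}(f)=:n$; since $\psi(f)=0$, one has $\psi(\LF_w(f))=-\psi(r)\in I^w_{n+1}$, so $\Phi(\LF_w(f))=0$. Thus $\Phi$ induces a graded surjection $\bar\Phi\colon \C[z]/I^w\twoheadrightarrow G(w)$. The injectivity of $\bar\Phi$ is equivalent to the assertion that $\LF_w\cF$ generates the initial ideal of $\ker\psi$ with respect to the $w$-filtration; by the standard-basis criterion in the Cohen--Macaulay ring $\C[z]$, it is enough to show that $\LF_w\cF$ is a regular sequence, equivalently that $\dim\C[z]/I^w\le 2$. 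Since $\#\cF=\#\cE-2=\dim\C[z]-2$, this automatically makes $\C[z]/I^w$ a complete intersection of dimension $2$; matching Krull dimensions with the $2$-dimensional $G(w)=\mathrm{gr}(\cO_{X^u,o})$ then forces the graded surjection $\bar\Phi$ to be an isomorphism, after which reducedness of $\C[z]/I^w$ is the only remaining assertion in the lemma.

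The main obstacle is therefore the dimension bound for $\C[z]/I^w$ together with its reducedness. I would establish both by explicit analysis of the $w$-leading forms in the spirit of \cite[2.6]{nw-CIuac}. For $v=w$, $\LF_w(f_{wj})=f_{wj}$ is the unaltered Neumann--Wahl polynomial at $w$. For $v\neq w$, splice-diagram semigroup calculations compare the $w$-orders of the admissible monomials attached to the distinct branches of $v$ and identify $\LF_w(f_{vj_v})$ as a linear combination of a proper subset of those monomials, indexed by the branches of $v$ disjoint from $w$ and involving only the variables $z_i$ for ends $i$ in those branches. The ideal $I^w$ then decouples along the tree structure of $\Gamma$ in such a way that dimension, complete intersection, and reducedness can be reduced to Neumann--Wahl complete intersection statements for smaller sub-splice-diagrams, already handled in loc.\ cit. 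Verifying this combinatorial reduction rigorously is the hardest step; once it is in hand, the chain of implications above yields the lemma.
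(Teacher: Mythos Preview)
Your overall framework is correct and tracks the paper closely: both reduce the isomorphism $G(w)\cong\C[z]/I^w$ to showing that $\LF_w\cF$ is a regular sequence (the paper invokes \cite[3.3]{nw-CIuac} for this reduction, which is your standard-basis criterion), and your description of $\LF_w(f_{vj_v})$ for $v\neq w$ as involving only admissible monomials from the branches of $v$ not containing $w$ agrees with the paper's computation.

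The divergence is in how the regular-sequence and reducedness claims are actually proved. The paper does \emph{not} attempt an inductive tree-decoupling to smaller splice diagrams. Instead, it first disposes of the case $\delta_w\ge 3$ by citing \cite[2.6]{nw-CIuac}, and for $\delta_w\le 2$ it normalizes the leading forms so that $\LF_v(f_{vj_v})=m_{vj_v}+a_{vj_v}m_{v\,\delta_v-1}+b_{vj_v}m_{v\,\delta_v}$ with $m_{v\,\delta_v}$ in the branch through $w$; then $\LF_w(f_{vj_v})=m_{vj_v}+a_{vj_v}m_{v\,\delta_v-1}$. Choosing two ends $E_1,E_2$ separated by $E_w$ (or $w=1$ when $\delta_w=1$), it checks directly that the ideal generated by $\LF_w\cF\cup\{z_1,z_2\}$ is zero-dimensional with support at the origin, which gives the regular sequence, and that for any $c_1,c_2\in\C^*$ the fiber $\LF_w\cF\cup\{z_1-c_1,z_2-c_2\}$ is nonsingular, which gives reducedness. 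This parameter-cutting argument is short and concrete, whereas your proposed decoupling is left as an unexecuted ``hardest step''; you should either carry it out or adopt the paper's route.

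One caution: your remark that ``matching Krull dimensions \dots\ forces the graded surjection $\bar\Phi$ to be an isomorphism'' is not a valid argument on its own (a surjection of two-dimensional graded rings need not be injective, e.g.\ $\C[x,y,\epsilon]/(\epsilon^2)\twoheadrightarrow\C[x,y]$). The injectivity genuinely requires the standard-basis/regular-sequence step you stated earlier, so that sentence should be dropped or rephrased as a consistency check rather than an argument.
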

\begin{proof}
 In case $\delta_w\ge 3$, it follows from  \cite[2.6]{nw-CIuac}.
Assume that $\delta_v\le 2$.
To prove the isomorphism $G(w)\cong \C[z]/ I^w$, we only need to show that 
 $\LF_w\cF$ forms a regular
 sequence (see \cite[3.3]{nw-CIuac}). 
We may assume that for every 
 $v\in\cN$  the $v$-leading forms of $f_{vj_v}$   are expressed as  
$$
\LF_v (f_{vj_v}) =
m_{vj_v}+a_{vj_v}m_{v\delta_v-1}+b_{vj_v}m_{v\delta_v},
 \quad 
 1\le j_v \le \delta_v-2, 
$$
and  that the admissible monomial $m_{v\delta_v}$ belongs to the branch
 containing $E_w$.
Note that  higher terms with respect to $v$-degree are also higher
 terms  with respect to $w$-degree (cf. \cite[3.8]{o.uac-certain}).
Thus we have 
$$
 \LF_w (f_{vj_v}) =
m_{vj_v}+a_{vj_v}m_{v\delta_v-1}, \quad  v \in \cN, \;
 1\le j_v \le \delta_v-2.
$$
Let $E_1$  and $E_2$ be ends.
Suppose that  these ends are separated by $E_w$ if
 $\delta_w=2$, or that $w=1$ if $\delta_w=1$.
Then  the ideal of $\C[z]$ 
generated by $\LF_w\cF \cup
 \{z_1,z_2\}$ defines a zero-dimensional variety whose
 support is the origin, and hence  $\LF_w\cF$ is
 a regular sequence (cf. \cite[4.4]{o.uac-certain}).
We see also that for any $c_1, c_2 \in \C^*$,
 the zero-dimensional  variety defined by $\LF_w\cF \cup
 \{z_1-c_1,z_2-c_2\}$ is nonsingular. 
Therefore $\C[z]/ I^w$ is
 reduced
(cf. \cite[p. 711]{nw-CIuac}).
\end{proof}

\begin{lem}\label{l:multMC}
Let $w \in \cV$ and let $C_1, \dots, C_{\delta_w}$ be the branches
 of $E_w$. Then there exists a positive integer $n$ and
 monomial cycles $D_1, \dots, D_{\delta_w}$ such that
 $D_i-nE_w^*$   is an effective integral cycle
 supported on $C_i$ for every $i=1, \dots, \delta_w$.
\end{lem}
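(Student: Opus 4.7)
Although stated within the splice-quotient section, this assertion is a purely combinatorial/linear-algebraic fact about the dual graph $\Gamma$ and does not require the monomial condition. The strategy, for each branch $C_i$, is to write $D_i$ as a positive integer multiple of $E_{e_i}^*$ for a well-chosen end $e_i \in C_i \cap \cE$, and then invoke a chain lemma to relate $E_{e_i}^*$ to $E_w^*$ modulo a cycle supported on $C_i$.

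\emph{Step 1 (each branch contains an end).} Let $w_1 \in C_i$ be the vertex of $C_i$ adjacent to $w$. If $C_i = \{w_1\}$, the unique neighbor of $w_1$ is $w$, so $\delta_\Gamma(w_1) = 1$ and $w_1 \in \cE$. Otherwise $C_i$ has a leaf $\ell \ne w_1$ (every tree on $\ge 2$ vertices has $\ge 2$ leaves), and all $\Gamma$-neighbors of such an $\ell$ lie in $C_i$, so $\delta_\Gamma(\ell) = 1$, i.e., $\ell \in \cE$. Fix any $e_i \in C_i \cap \cE$.

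\emph{Step 2 (chain lemma).} For $v \in C_i$, set $S := E \setminus C_i$. For $u \in S \setminus \{w\}$, all $\Gamma$-neighbors of $u$ lie in $S$, so $(E_v^*)|_S \cdot E_u = E_v^* \cdot E_u = 0$; at $u = w$ the neighbor $w_1 \in C_i$ is excluded, giving $(E_v^*)|_S \cdot E_w = -(E_v^*)_{w_1}$. Similarly $(E_w^*)|_S \cdot E_u = 0$ for $u \in S \setminus \{w\}$ and $(E_w^*)|_S \cdot E_w = -1 - (E_w^*)_{w_1}$. Both right-hand sides are supported at $u = w$, so since $I(S)$ is negative-definite (a principal submatrix of $I(E)$), the restrictions are proportional:
\[
(E_v^*)|_S = \beta_v (E_w^*)|_S, \qquad \beta_v := \frac{(E_v^*)_{w_1}}{1 + (E_w^*)_{w_1}} > 0.
\]
Consequently $A_v := E_v^* - \beta_v E_w^*$ is supported on $C_i$; the analogous computation on $C_i$ shows that $A_v|_{C_i}$ is the star cycle $E_v^{*,C_i}$ of $v$ in the subgraph $C_i$, whose coefficients are all strictly positive.

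\emph{Step 3 (construction and main obstacle).} Apply Step 2 to $v = e_i$, obtaining $A_i := E_{e_i}^* - \beta_i E_w^*$, supported on $C_i$ and positive there. Take $m \in \Z_{>0}$ divisible by common denominators of $\beta_i$ and of $E_{e_i}^{*,C_i}$ (for instance $m := |H| \cdot \det(-I(C_i))$), and set $D_i := m E_{e_i}^* \in \cM$ and $n_i := m \beta_i \in \Z_{>0}$. Then $D_i - n_i E_w^* = m A_i$ is effective, integer, and supported on $C_i$. Finally, replace $n$ by a common multiple of $n_1, \dots, n_{\delta_w}$ and scale each $D_i$ accordingly. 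The main subtlety is the chain lemma in Step 2 --- specifically, the observation that both linear systems on $S$ have right-hand sides supported at the single vertex $w$, which forces proportionality via invertibility of $I(S)$; the parallel identification $A_v|_{C_i} = E_v^{*,C_i}$ then gives the positivity needed for $m A_i$ to be an effective integer cycle.
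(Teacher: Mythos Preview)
Your proof is correct and lands on exactly the same construction as the paper: $D_i$ is a positive integer multiple of $E_{e_i}^*$ for an end $e_i\in C_i$, and $D_i-nE_w^*$ is identified with (a multiple of) the star cycle of $e_i$ computed in the subgraph $C_i$. The paper's argument is more direct: it introduces that subgraph star cycle $E_{w_i}^{\times}$ (your $E_{e_i}^{*,C_i}$) first, sets $D_i:=n_iE_{w_i}^{\times}+nE_w^*$ with $n:=n_iE_{w_i}^{\times}\cdot E_w$, and a one-line check of intersection numbers against every $E_j$ shows $D_i=n_iE_{w_i}^*$, hence a monomial cycle. Your Step~2 derives the same identity from the opposite end, by restricting $E_{e_i}^*$ and $E_w^*$ to $S=E\setminus C_i$ and using invertibility of $I(S)$ to force proportionality there; this is a valid and somewhat more conceptual route, but it is longer than necessary for the statement at hand.
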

\begin{proof}
For every $1\le i  \le \delta_w$, let  $E_{w_i}$ be an end
 contained in  $C_i$.  Let $E_{w_i}^{\times}$ be the rational cycle
 supported on $C_i$ such that $E_{w_i}^{\times}\cdot
 E_j=-\delta_{w_i j}$ for all $E_j\subset C_i$. 
Take positive
integers  $n, n_1,  \dots, n_{\delta_w}$ such that 
$n_iE_{w_i}^{\times}\in L$ and $n=n_iE_{w_i}^{\times}\cdot E_w$ for
 every $i$. Then put  $D_i=n_iE_{w_i}^{\times}+nE_w^*$.
\end{proof}

In  \cite[4.5]{nem.coh-sq}, the following results are shown
as a corollary of the equivariant
Campillo--Delgado--Gusein-Zade formula.

\begin{prop}\label{p:filt}
 For every $w\in \cV$, we have $I^w_n=\left(\rho_*\cO_{\tX^u}(-nF_w)\right)_o$.
\end{prop}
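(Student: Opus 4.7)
The plan is to prove the reverse inclusion $J_n^w := \left(\rho_*\cO_{\tX^u}(-nF_w)\right)_o \subseteq I_n^w$ for all $n$; the inclusion $I^w_n \subseteq J^w_n$ is already recorded before the statement and reflects that for a monomial cycle $D$, the $w$-degree of $z(D)$ equals the coefficient of $F_w$ in $p^*D$, which is the order of vanishing of $\psi(z(D))$ along $F_w$.

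My approach is to compare the Poincaré series of the two filtrations $\{I_n^w\}$ and $\{J_n^w\}$ on $\cO_{X^u,o}$. First, I will use \lemref{l:G} to identify the associated graded ring $G(w) = \bigoplus_n I_n^w/I_{n+1}^w$ with the reduced complete intersection $\C[z]/I^w$; grading $\C[z]$ by setting the $w$-degree of $z_i$ equal to the coefficient $e_i$ of $F_w$ in $p^*E_i^*$ makes each $\LF_w(f_{vj_v})$ $w$-homogeneous of some degree $d_{vj_v}$, and the standard complete-intersection Poincaré series gives
$$P(t) := \sum_{n \ge 0}\dim_\C G(w)_n \cdot t^n \;=\; \frac{\prod_{v \in \cN,\,1 \le j_v \le \delta_v - 2}\bigl(1 - t^{d_{vj_v}}\bigr)}{\prod_{i \in \cE}\bigl(1 - t^{e_i}\bigr)}.$$

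Next, I will invoke the equivariant Campillo--Delgado--Gusein-Zade formula of \cite{nem.coh-sq}, specialized to the single divisorial valuation $\ord_{F_w}$ on $\cO_{X^u,o}$; it expresses
$$Q(t) := \sum_{n \ge 0}\dim_\C \bigl(J_n^w/J_{n+1}^w\bigr)\cdot t^n$$
as an explicit rational function of $t$ determined by $\Gamma$, and a direct calculation (essentially the content of \cite[4.5]{nem.coh-sq}) identifies $Q(t)$ with $P(t)$. Finally, both $\cO_{X^u,o}/I_n^w$ and $\cO_{X^u,o}/J_n^w$ are of finite $\C$-dimension (the former because $G(w)$ is a two-dimensional graded complete intersection with polynomially bounded Hilbert function, the latter from $\rho_*\cO_{\tX^u} = \cO_{X^u}$ and the exact sequence $0 \to \rho_*\cO_{\tX^u}(-nF_w) \to \cO_{X^u} \to \rho_*\cO_{nF_w}$), so coefficient-wise equality of $P$ and $Q$ together with $I_n^w \subseteq J_n^w$ forces the natural surjection $\cO_{X^u,o}/I_n^w \twoheadrightarrow \cO_{X^u,o}/J_n^w$ to be an isomorphism, yielding $I_n^w = J_n^w$.

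The main obstacle I anticipate is the second step: correctly specializing the multi-variable equivariant CDG formula of N\'emethi to the one-variable $w$-filtration and identifying the resulting Poincar\'e series with the complete-intersection expression $P(t)$. The first and third steps are essentially formal once \lemref{l:G} and the Poincar\'e-series identification are in hand.
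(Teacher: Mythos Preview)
Your approach is correct but genuinely different from the paper's. The paper does not compare Poincar\'e series; instead it invokes the argument of \cite[3.3]{o.pg-splice}, for which only two ingredients are needed: that $G(w)$ is reduced (supplied by \lemref{l:G}) and that $\cO_{\tX}(-\sigma(mE_w^*))$ is globally generated for some $m$ (supplied by the end curve condition together with \lemref{l:multMC}). That argument is a direct algebro-geometric comparison of the two filtrations and does not pass through the Campillo--Delgado--Gusein-Zade machinery at all.

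Your route---computing $P(t)$ from the complete-intersection structure and $Q(t)$ from the equivariant CDG formula, then matching coefficients---is exactly the alternative the paper acknowledges just before the statement, attributing it to \cite[4.5]{nem.coh-sq}. So in effect your proposal reproduces N\'emethi's proof rather than Okuma's. The trade-off is clear: your approach yields the explicit Poincar\'e series as a by-product, but the identification $P(t)=Q(t)$ rests on the nontrivial CDG formula and its specialization, which you rightly flag as the main obstacle and largely defer to \cite{nem.coh-sq}. The paper's approach is more self-contained within the splice-quotient framework, needing only the reducedness of the graded ring and a global-generation statement that follows from the end curve condition; it avoids the CDG formula entirely.

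Your finite-dimensionality step and the concluding dimension-count argument (equal Poincar\'e series plus the inclusion $I_n^w\subseteq J_n^w$ forces equality) are fine as stated.
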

\begin{proof}
The proof is the same as that of \cite[3.3]{o.pg-splice}. 
There are only two essential points for the proof:
$G(w)$ is reduced by \lemref{l:G};
 $\cO_{\tX}(- \sigma (mE_w^*))$
 is generated by global sections  for some $m\in \N$, which follows from 
 the end curve condition and \lemref{l:multMC}.
\end{proof}

\begin{cor}\label{c:filt-monom}
 For any $A\in L^*$, the space
 $H^0(\cO_{\tX}(-\sigma(A)))$ consists of the series of
 $\psi(z(D))$ with $D\ge A$ and $D-A\in L$. 
\end{cor}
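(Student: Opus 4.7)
The plan is to prove both inclusions. For the inclusion $\supseteq$, given a monomial cycle $D$ with $D\ge A$ and $D-A\in L$, I apply multiplicativity of $(\cdot)_E$ together with $(x_i)_E=E_i^*$ (available from the end curve condition) to obtain $(\psi(z(D)))_E=D$, so $\psi(z(D))\in H^0(\cO_{\tX}(-\sigma(D)))$. Since $D-A\in L$ forces $\Theta(D)=\Theta(A)$, the fractional parts of $D$ and $A$ coincide, and therefore $\sigma(D)-\sigma(A)=[D]-[A]=D-A\ge 0$; this yields the sheaf inclusion $\cO_{\tX}(-\sigma(D))\hookrightarrow\cO_{\tX}(-\sigma(A))$. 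Passing to convergent sums gives the general series.

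For the inclusion $\subseteq$, I take $f\in H^0(\cO_{\tX}(-\sigma(A)))$; setting $\chi=\Theta(A)$, this $f$ lies in the $\chi$-eigenspace of $\cO_{X^u,o}$. By the End Curve Theorem, $\psi\colon \C\{z\}\to\cO_{X^u,o}$ is surjective and $H$-equivariant, so I can write $f=\psi(g)$ with $g=\sum c_D z(D)$ a power series in which each $D$ satisfies $\Theta(D)=\chi$, equivalently $D-A\in L$. It remains to show that $g$ may be chosen so that every monomial in its support also satisfies $D\ge A$. The key observation is that among $\chi$-eigencycles the inequality $D\ge A$ is equivalent to the $v$-degree of $z(D)$ being at least the coefficient of $F_v$ in $p^*A$ for every $v\in\cV$ (using injectivity of $p^*$ on $L\otimes\Q$), and this matches exactly the vanishing order of $f$ along $F_v$ forced by $f\in H^0(\cO_{\tX}(-\sigma(A)))$ once the common shift $\sigma(A)-A=L_\chi-\{A\}$ is accounted for. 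By \proref{p:filt} applied at each $v$, we therefore have $f\in I^v_{n_v}$ for the appropriate integer $n_v$.

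The final step, following the argument of \cite[3.3]{o.pg-splice}, is to use \lemref{l:G}, which identifies $G(v)\cong \C[z]/I^v$ as a reduced complete intersection, to excise offending monomials one at a time: any $z(D_0)$ appearing in $g$ with $D_0\not\ge A$ can, modulo the Neumann--Wahl relations generating $\ker\psi$, be rewritten as a sum of monomials of strictly larger $v$-degree for some suitable $v\in\cV$. Because each $\LF_v(f_{vj_v})$ is itself an $H$-eigenform, the substitution stays inside the $\chi$-eigenspace, and iterating produces a formal series $g$ supported on $\{D:D\ge A,\,D-A\in L\}$. The main obstacle is coordinating the successive eliminations so that they respect all $v$-filtrations simultaneously and so that the process converges as a formal power series; the reducedness in \lemref{l:G} is the essential input, since it forces the $v$-orders of the remaining offending terms to strictly increase at each stage, yielding formal convergence.
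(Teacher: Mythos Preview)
Your overall strategy matches the paper's: both proofs ultimately rest on \proref{p:filt} and the identification of $H^0(\cO_{\tX}(-\sigma(A)))$ as the $\Theta(A)$-eigenspace inside $H^0(\cO_{\tX^u}(-p^*A))$. The paper, however, packages the argument far more efficiently. It simply cites \cite[3.5]{o.pg-splice} for the eigenspace identification and then observes that, by \proref{p:filt} and the very definition of the $w$-order, $H^0(\cO_{\tX^u}(-p^*A))$ already \emph{is} the space of series in $\psi(z(D))$ with $D\ge A$; taking $\Theta(A)$-eigenvectors then gives the extra condition $D-A\in L$. Your $\supseteq$ direction is fine and your reduction of $\subseteq$ to $f\in\bigcap_v I^v_{n_v}$ is correct, but your last paragraph does real work that the paper delegates to the cited references, and it contains an error rather than just a gap.

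The problematic claim is that an individual monomial $z(D_0)$ with $D_0\not\ge A$ ``can, modulo the Neumann--Wahl relations, be rewritten as a sum of monomials of strictly larger $v$-degree''. This is false: $\psi(z(D_0))=\prod_i x_i^{a_i}$ has \emph{exact} vanishing order along $F_v$ equal to the $v$-degree of $z(D_0)$ (by the end-curve condition and multiplicativity of $(\cdot)_E$), so by \proref{p:filt} its class in $G(v)$ at that degree is nonzero. What reducedness of $G(v)$ (\lemref{l:G}) actually gives is that the \emph{entire} $v$-leading form of $g$ lies in $I^v$ whenever $\psi(g)\in I^v_{n_v}$ with $n_v>\vord{v}(g)$; you can then kill that whole leading form by subtracting a suitable element of $\ker\psi$, raising $\vord{v}(g)$ strictly. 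But the subtraction may introduce terms that are ``bad'' for other vertices $w$, so your proposed monomial-by-monomial excision does not work as stated, and the sentence ``the reducedness \dots forces the $v$-orders of the remaining offending terms to strictly increase at each stage'' does not by itself control the other $w$-orders. One way to repair this is to note that there are only finitely many monomial cycles $D\not\ge A$ (since every $E_i^*$ has strictly positive coefficients), split off the ``good'' part of $g$ first, and then run a careful induction on the finite ``bad'' piece; but this is exactly the argument already encapsulated in \cite[3.3]{o.pg-splice}, which is why the paper simply invokes \proref{p:filt} rather than reproving it.
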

\begin{proof}
By \cite[3.5]{o.pg-splice}, 
 $H^0(\cO_{\tX}(-\sigma(A)))$ is the $\Theta(A)$-eigenspace
 of  $H^0(\cO_{\tX^u}(-p^*A))$.
It follows from  \proref{p:filt} and the definition of the
 $w$-order that the space $H^0(\cO_{\tX^u}(-p^*A))$  consists of the series of
 $\psi(z(D))$ with $D\ge A$.
\end{proof}

\section{The multiplicity of splice quotients}\label{s:mult}

We assume that $\tX$   satisfies the end curve
 condition. Hence $X$ is a splice quotient.
 For each $i \in \cE$ we fix the section $x_i$ and  the  end
 curve $C_i$ in
 \defref{d:ecc}, and let  $C_i$ intersect $E$ at a point $b_i\in E_i$.
It is clear that  $\cO_{\tX}(-\sigma (E_i^*))$ is generated
 by global sections at every point of $E\setminus\{b_i\}$
 for every $i\in \cE$
and that the same is true for $\cO_{\tX}(-D)$ if $D$ is a monomial cycle.

For any $\Q$-cycle $D=\sum_{v\in \cV}c_vE_v$, we denote by $M_v(D)$ the coefficient of $E_v$ in $D$, i.e., $M_v(D)=c_v$.

\begin{prop}\label{p:freeend}
Let  $i \in \cE$.
Then   $\cO_{\tX}(-\sigma (E_i^*))$ is  generated
 by global sections at $b_i$ if and only if 
there exists a monomial cycle $D_i \in \sum_{j\in
 \cE\setminus\{i\}}\Z_{\ge 0}E_j^*$ such that $M_i(D_i)=M_i(E_i^*)$.
\end{prop}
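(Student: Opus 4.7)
The plan is to reduce ``generated at $b_i$'' to a combinatorial condition on monomial cycles by combining a local analysis at the smooth point $b_i$ with the monomial-cycle description of sections given by \corref{c:filt-monom}.

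Since $\delta_i=1$, the point $b_i$ is a smooth point of $E$ lying on $E_i$, and $C_i$ meets $E_i$ transversally there (as $C_i\cdot E_i=1$). I choose local coordinates $(u,v)$ at $b_i$ with $\{u=0\}=E_i$ and $\{v=0\}=C_i$, and abbreviate $m_j:=M_i(E_j^*)$, the coefficient of $E_i$ in $\sigma(E_j^*)$. The identity $\di(x_i)=\sigma(E_i^*)+C_i$ then gives $x_i=u^{m_i}v\cdot(\text{unit})$ at $b_i$; for $j\ne i$, the end curve $C_j$ only meets $E$ at $b_j\in E_j$ and hence misses $b_i$, so $x_j=u^{m_j}\cdot(\text{unit})$ at $b_i$. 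Consequently, for any monomial cycle $D=\sum a_j E_j^*$, I expect
\[
\psi(z(D))=\prod_j x_j^{a_j}=u^{M_i(D)}v^{a_i}\cdot(\text{unit})
\]
at $b_i$, where $M_i$ is extended $\Z_{\ge 0}$-linearly. The stalk $\cO_{\tX}(-\sigma(E_i^*))_{b_i}$ is the principal ideal $(u^{m_i})$, so a global section generates it iff, locally at $b_i$, it equals $u^{m_i}$ times a unit.

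By \corref{c:filt-monom}, every global section of $\cO_{\tX}(-\sigma(E_i^*))$ is a convergent series in terms $\psi(z(D))$ with $D\ge E_i^*$ and $D-E_i^*\in L$. Such a series generates the stalk $(u^{m_i})$ at $b_i$ iff some individual summand does, which happens exactly when that $D$ satisfies $a_i=0$ (no $v$-factor) and $M_i(D)=m_i=M_i(E_i^*)$ (correct $u$-power). The first condition is $D\in\sum_{j\in\cE\setminus\{i\}}\Z_{\ge 0}E_j^*$ and the second is the stated equality, which gives both directions of the proposition.

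The main obstacle is justifying the local expression $x_j=u^{m_j}\cdot(\text{unit})$ at $b_i$: this requires checking that the effective divisor $L_{\Theta(E_j^*)}$ appearing in $\sigma(E_j^*)=L_{\Theta(E_j^*)}+[E_j^*]$ can be taken to miss $b_i$, which follows from regularity of $x_j$ at $b_i$ combined with $\di(x_j)=\sigma(E_j^*)+C_j$ and $b_i\notin C_j$. Once this local picture is in place, the rest is a matching of $u$- and $v$-exponents between $\psi(z(D))$ and the generator $u^{m_i}$ of the stalk.
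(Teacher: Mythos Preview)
Your strategy --- invoking \corref{c:filt-monom} and then analysing the monomial sections locally at $b_i$ --- is the same as the paper's, but the paper carries out the local step via the restriction map $r\colon H^0(\cO_{\tX}(-\sigma(E_i^*)))\to H^0(\cO_{E_i}(-\sigma(E_i^*)))$ to a degree-$1$ line bundle on $E_i\cong\P^1$, rather than in explicit coordinates.

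Your coordinate computation has a genuine error. You set $m_j:=M_i(E_j^*)$ and call it ``the coefficient of $E_i$ in $\sigma(E_j^*)$'', then write $x_j=u^{m_j}\cdot(\text{unit})$. But $M_i(E_j^*)$ is the $E_i$-coefficient of the $\Q$-cycle $E_j^*$ and is in general \emph{not} an integer (e.g.\ $M_1(E_2^*)=\tfrac12$ in the paper's first example), whereas $x_j$ is an honest regular function on $\tX$ --- since $\sigma(E_j^*)$ is an effective integral divisor and $\cO_{\tX}(-\sigma(E_j^*))\subset\cO_{\tX}$ --- so its order along $E_i$ must be the integer $M_i(\sigma(E_j^*))$, not the rational $M_i(E_j^*)$. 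Consequently the formula $\psi(z(D))=u^{M_i(D)}v^{a_i}\cdot(\text{unit})$ is ill-posed, and the stalk is not $(u^{m_i})$ with $m_i=M_i(E_i^*)$. The repair is to compute the zero divisor of $\psi(z(D))$ \emph{as a section of the line bundle} $\cO_{\tX}(-\sigma(E_i^*))$: for $D-E_i^*\in L$ (which \corref{c:filt-monom} supplies) one has $\sigma(D)=\sigma(E_i^*)+(D-E_i^*)$, so this zero divisor is $(D-E_i^*)+\sum_j a_jC_j$, which near $b_i$ equals $(M_i(D)-M_i(E_i^*))E_i+a_iC_i$ and gives exactly your intended criterion. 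The paper's argument is a repackaging of this: from $M_i(D_i)=M_i(E_i^*)$ and $a_i=0$ it deduces $M_{i'}(D_i)=M_{i'}(E_i^*)+1$ by intersecting $D_i-E_i^*$ with $E_i$, so $r(\psi(z(D_i)))$ has its zero at $b_i'=E_i\cap E_{i'}$; together with $r(x_i)$ (zero at $b_i$) this spans the two-dimensional target $H^0(\cO_{E_i}(-\sigma(E_i^*)))$, whence $r$ is surjective.
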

\begin{proof}
We first note that  $\cO_{\tX}(-\sigma (E_i^*))$ is generated
 by global sections at $b_i$ if and only if the restriction map
$$
r\: H^0(\cO_{\tX}(-\sigma(E_i^*))) \to 
H^0(\cO_{E_i}(-\sigma(E_i^*)))
$$
is surjective. 
It is clear that  the
 dimension of 
 $H^0(\cO_{E_i}(-\sigma(E_i^*)))$ is two and $r(x_i)$ has a zero at $b_i$.

Suppose that there exists a monomial cycle $D_i$ with the
 property in the assertion.
If $E_{i'}$ denotes the irreducible component intersecting
 $E_i$, then $M_{i'}(D_i)=M_{i'}(E_i^*)+1$.
Hence $r(\psi(z(D_i)))$
 has a zero at $\{b_i'\}:=E_i\cap E_{i'}$, and   $r$ is
 surjective.

Assume that $r$ is surjective.
Then there exists a section $f\in H^0(\cO_{\tX}(-\sigma(E_i^*)))$ such
 that $r(f)$ has a zero at $b_i'$. 
The divisor $(f)_E\in L^*$ satisfies that
$(f)_E\ge E_i^*$,  $(f)_E-E_i^*\in
 L$, and 
 $M_i((f)_E)=M_i(E_i^*)$.
By \corref{c:filt-monom},
since $f\in H^0(\cO_{\tX}(-\sigma((f)_E)))$,
we may assume that $f$ is a linear combination of
 monomials  $\psi(z(D))$ with $D\ge (f)_E$ and $M_i(D)=M_i(E_i^*)$. 
Hence there exists a monomial cycle such as $D_i$.
\end{proof}

Let $\cB$ be the set of points $b_i$ being the base point of
$\cO_{\tX}(-\sigma(E_i^*))$. 

\begin{rem}\label{r:cB}
By \proref{p:freeend}, whether $b_i$ is in
$\cB$ or not depends only on $\Gamma$.
\end{rem}

 Let $\tau \:W\to \tX$ be the blowing-up with center $\cB$.
For each $i\in \cE$,  let $F_i=\tau^{-1}(b_i)$ if $b_i\in
\cB$, and  let $F_i=\tau^{-1}(E_i)$ otherwise.
Note that $\{F_i\}_{i\in \cE}$ is the set of all ends 
of $F:=\tau^{-1}(E)$.
On $W$,  we can also consider $F_i^*$,
$\sigma$, monomial cycles, and so on.

\begin{lem}\label{l:gen-end}
An invertible sheaf $\cO_{W}(-\sigma(F_i^*))$ is generated by global sections
 for every $i\in \cE$.  As a consequence, for every monomial
 cycle $D$ on $W$, $\cO_W(-\sigma (D))$ is also generated by
 global sections.
\end{lem}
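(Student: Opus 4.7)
The plan is to apply Proposition~\ref{p:freeend} directly on $W$, which is legitimate since $W$ is obtained from $\tX$ by blowing up points $b_i$ lying on end curves, so by the remark following Definition~\ref{d:ecc} it also satisfies the end curve condition. For each end $F_i$ of $F$, the pulled back section $\tau^* x_i$ is a global section of $\cO_W(-\sigma(F_i^*))$ whose off-$F$ zero locus is the strict transform $C_i^W$, meeting $F$ only at a single point $b_i^W \in F_i$; by Proposition~\ref{p:freeend} applied on $W$, global generation there reduces to producing a monomial cycle $D_i^W \in \sum_{j \ne i}\Z_{\geq 0} F_j^*$ on $W$ with $M_i(D_i^W) = M_i(F_i^*)$.

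For $i \in \cE\setminus\cB$, the morphism $\tau$ is an isomorphism in a neighborhood of $E_i$, so $F_i = \tilde E_i$, $F_i^* = \tau^* E_i^*$, and one checks that $M_i(F_j^*) = M_i(E_j^*)$ for every $j \in \cE$; the criterion on $W$ coincides with the criterion on $\tX$ and holds by the definition of $\cB$. For $i \in \cB$, intersection computation gives $F_i^* = \tau^* E_i^* + F_i$, whence $M_i(F_i^*) = M_i(E_i^*) + 1$ while $M_i(F_j^*) = M_i(E_j^*)$ for $j \ne i$. One must then produce a monomial cycle $D_i^W \in \sum_{j \ne i}\Z_{\geq 0} F_j^*$ with coefficient $M_i(E_i^*) + 1$ at $F_i$ and the correct character $\Theta(F_i^*)$. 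I would construct this by combining Lemma~\ref{l:multMC} applied at the end $E_i$ of $\tX$ (yielding $n > 0$ and a monomial cycle $D$ with $D - nE_i^*$ an effective integer cycle on $E - E_i$) with the identity $|E_i^2|\,M_i(E_i^*) = 1 + M_i(E_{i'}^*)$ coming from $E_i^* \cdot E_i = -1$ (where $E_{i'}$ is the unique neighbor of $E_i$); the extra unit that the blow-up introduces in $M_i(F_i^*)$ is precisely what this identity allows one to absorb.

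For the second statement, given a monomial cycle $D = \sum_i a_i F_i^*$ on $W$, the canonical section $\psi(z(D)) = \prod_i (x_i^W)^{a_i}$ has divisor $\sigma(D) + \sum_i a_i C_i^W$, so it generates $\cO_W(-\sigma(D))$ at any $p \in F$ outside the finite set $\{b_j^W : a_j > 0\}$. At $p = b_j^W$ with $a_j > 0$, I would replace the factor $(x_j^W)^{a_j}$ using the monomial cycle $D_j^W$ from the first part of the lemma: setting $D' := D + a_j(D_j^W - F_j^*)$, one checks that $D' - D = a_j(D_j^W - F_j^*)$ is an effective integer cycle on $F$, $D' \ge D$, and $D'$ has no $F_j^*$ summand, so $\psi(z(D')) \in H^0(\cO_W(-\sigma(D)))$ and does not vanish at $b_j^W$. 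The principal obstacle is the combinatorial construction in Case $i \in \cB$: producing a monomial cycle with $M_i$-coefficient exactly shifted by one, which depends on the specific compatibility at the end $E_i$ between the intersection relations and the monomial condition.
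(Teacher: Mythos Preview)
Your route through \proref{p:freeend} on $W$ is different from the paper's, and the obstacle you flag is a genuine gap that your sketch does not close. For $b_i\in\cB$ the paper never invokes the combinatorial criterion: it simply notes that $\cO_{E_i}(-\sigma(E_i^*))$ has degree one, so $I_{b_i}\cO_{\tX}(-\sigma(E_i^*))$ is generated by global sections on $\tX$; pulling back gives that $\cO_W(-\tau^*\sigma(E_i^*)-F_i)$ is generated by global sections, and the identification $F_i^*=\tau^*E_i^*+F_i$ finishes. No monomial cycle with a shifted $M_i$-coefficient is ever produced.

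The tools you propose do not supply one either. Applying \lemref{l:multMC} at the end $E_i$ (where $\delta_i=1$) yields a monomial cycle $D_1\in\sum_{j\ne i}\Z_{\ge 0}E_j^*$ with $M_i(D_1)=n\,M_i(E_i^*)$ for some integer $n\ge 1$, not $M_i(E_i^*)+1$. The identity $|E_i^2|\,M_i(E_i^*)=1+M_i(E_{i'}^*)$ is correct but unusable as stated: $E_{i'}$ is in general not an end, so $E_{i'}^*$ is not a monomial cycle and cannot appear as a summand of $D_i^W$. What you would actually need is that $M_i(E_i^*)+1$ lies in the numerical semigroup $\sum_{j\in\cE\setminus\{i\}}\Z_{\ge 0}\,M_i(E_j^*)$, and nothing in your outline establishes this. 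Your argument for the second assertion inherits the same problem and adds another: you assert that $D_j^W-F_j^*$ is an effective integral cycle, but \proref{p:freeend} only matches the single coefficient $M_j^W$, so neither effectivity nor integrality of the difference follows.
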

\begin{proof}
If  $F_i=\tau^{-1}(E_i)$, then
 $\cO_{W}(-\sigma(F_i^*))\cong
 \tau^*\cO_{\tX}(-\sigma(E_i^*))$ is generated by global sections.
Assume that $b_i\in \cB$. Let $I_i\subset \cO_{\tX}$ be the
 ideal sheaf of the point $b_i$.
Since  the degree
 of $\cO_{E_i}(-\sigma(E^*_i))$ is one,  
$I_i\cO_{\tX}(-\sigma(E_i^*))$  is generated by global
 sections, thus so is $I_i\tau^*\cO_{\tX}(-\sigma(E_i^*))=
 \cO_{W}(-\tau^*\sigma(E_i^*)-F_i)$. 
Then the assertion follows from that $F_i^*=\tau^*E_i+F_i$.
\end{proof}

We shall compute the multiplicity of the
singularity $(X_1, o)$, where $H_1$ is a subgroup of $H$ and
$X_1=X^u/H_1$.
Let $\tX'\to X$ be the minimal good resolution.
We define the set $\cB'$ of points on $\tX'$  in the same
way as the set $\cB$ above.
Suppose that  $\tX\to \tX'$ is the blowing-up with center
       $\cB'$. Then $\cB=\emptyset$ by \lemref{l:gen-end}.
By the definition of the $H$-action, the invariant ring
$\C\{z\}^{H_1}$ consists of the 
series of monomials $z(D)$ with $D\in
\cM^{H_1}:=\Theta^{-1}(H_1^{\bot})\cap \cM$.
Let $\cM^{H_1}_{gen}$ denote  the set of  generators 
 of the semigroup $\cM^{H_1}$.
Applying \lemref{l:to-gcd},
we can take the birational morphism $\varphi\:W\to\tX$ such that
  $\varphi^*\cM^{H_1}_{gen}$ satisfies the GCD
  condition. 
It is clear that   for every $D\in \varphi^*\cM^{H_1}_{gen}$,
 $\cO_{W}(-\sigma(D))$ is generated by global sections.
Thus the set  $\varphi^*\cM^{H_1}_{gen}$
       will play the
role of the set $\cD=\{D_1, \dots, D_m\}$ in \proref{p:mult-gen}.

Let $Z=\gcd\varphi^* \cM^{H_1}_{gen}$. 

\begin{thm}\label{t:main}
In the situation above,  $\mult(X_1,o)=-|H/H_1|Z\cdot Z$. Consequently,
 $\mult(X_1,o)$ is determined by the weighted dual graph
 of $\X$ and $H_1$.
\end{thm}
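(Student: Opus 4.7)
The plan is to reduce \thmref{t:main} to an application of \proref{p:mult-gen} on the model $W$, where both the GCD condition and the requisite global-generation hypotheses are satisfied by construction.

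First I identify an explicit set of generators for $\m_1$. Since the End Curve Theorem gives a surjective $H$-equivariant map $\psi\colon \C\{z\} \twoheadrightarrow \cO_{X^u,o}$, its restriction to $H_1$-invariants yields a surjection $\C\{z\}^{H_1} \twoheadrightarrow \cO_{X_1,o}$. The ring $\C\{z\}^{H_1}$ is the completion of the semigroup algebra on $\cM^{H_1}$, so its maximal ideal is generated by the monomials $z(D)$ with $D \in \cM^{H_1}_{gen}$. Setting $x_D := \psi(z(D))$, the family $\{x_D\}_{D \in \cM^{H_1}_{gen}}$ therefore generates $\m_1$. Iterating the product rule $(fg)_E = (f)_E + (g)_E$ on the end-curve identities $(x_i)_E = E_i^*$ gives $(x_D)_E = D$ for every monomial cycle $D$.

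Now choose a resolution $\bar X_1$ of $\tX_1$ that dominates the normalized fiber product $\tX_1 \times_{\tX} W$; this produces a morphism $\bar\varphi\colon \bar X_1 \to W$ satisfying $\varphi \circ \bar\varphi = p_1 \circ \alpha$, where $\alpha\colon \bar X_1 \to \tX_1$ is the resolution. Since $p_1$ has degree $|H/H_1|$ while $\varphi$ and $\alpha$ are birational, $\deg \bar\varphi = |H/H_1|$. By \lemref{l:p_1^*zero}, the exceptional part of the divisor of $x_D$ on $\bar X_1$ equals $\alpha^* p_1^* D = \bar\varphi^*\varphi^* D$, so the maximal ideal cycle on $\bar X_1$ is
\[
Z^1 = \gcd\bigl\{\bar\varphi^*\varphi^* D : D \in \cM^{H_1}_{gen}\bigr\}.
\]
Because $\varphi^*\cM^{H_1}_{gen}$ satisfies the GCD condition on $W$ by the choice of $\varphi$, \lemref{l:*gcd} applied to $\bar\varphi$ yields $Z^1 = \bar\varphi^* Z$. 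For each $D \in \cM^{H_1}_{gen}$ the sheaf $\cO_W(-\sigma(\varphi^* D))$ is globally generated (as noted in the setup), hence so is the pullback $\cO_{\bar X_1}(-\bar\varphi^*\sigma(\varphi^* D))$. At a point $y$ of the exceptional set of $\bar X_1$, the GCD condition supplies a neighborhood $U$ of $\varphi\bar\varphi(y)$ in $\tX$ and a $D$ with $Z|_{\bar\varphi^{-1}(U)} = (\varphi^* D)|_{\bar\varphi^{-1}(U)}$; combining this local equality with the inclusion $H^0(\cO_{\bar X_1}(-\bar\varphi^*\sigma(\varphi^* D))) \subset H^0(\cO_{\bar X_1}(-Z^1))$ shows that $\cO_{\bar X_1}(-Z^1)$ is globally generated near $y$. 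Then \thmref{t:mult} and the projection formula $Z^1 \cdot Z^1 = (\deg \bar\varphi)\, Z \cdot Z$ give
\[
\mult(X_1,o) = -Z^1 \cdot Z^1 = -|H/H_1|\, Z \cdot Z.
\]

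The main technical point to justify is the global generation of $\cO_W(-\sigma(\varphi^* D))$ for $D \in \cM^{H_1}_{gen}$ asserted before the theorem. Since the blow-ups composing $\varphi$ occur at singular points of exceptional sets only, the ends $F_i$ of $F = \varphi^{-1}(E)$ are the proper transforms of the $E_i$, and one checks $\varphi^* E_i^* = F_i^*$, so $\varphi^* D$ is again a monomial cycle in the sense of $W$. The end-curve generation property for $\cO_{\tX}(-\sigma(E_i^*))$ (available thanks to $\cB = \emptyset$ on $\tX$) therefore persists on $W$, and replaying the proof of \lemref{l:gen-end} in that setting yields the required global generation. This verification is the only nonformal ingredient; once it is in hand, the argument above runs entirely parallel to the proof of \proref{p:mult-gen}.
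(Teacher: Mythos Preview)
Your argument is correct and follows the same route as the paper: you apply (indeed, unroll) \proref{p:mult-gen} with $W$ in place of $\tX$, supplying the global-generation verification that the paper declares ``clear'' in the setup. The only omission is the second sentence of the theorem, which the paper handles by observing via \proref{p:freeend} that the passage from the minimal good resolution to $\tX$ and then to $W$---and hence $Z\cdot Z$---is determined purely by the graph and $H_1$.
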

\begin{proof}
The formula follows from  \proref{p:mult-gen}.
On the other hand, \remref{r:cB} shows that the
 resolution graph of $\tX$ can be  determined by that of $\tX'$,
 and whether the GCD condition is satisfied depends on the
 graph. Hence the  resolution graph of $W$ is determined by  the weighted dual graph
 of $\X$ and $H_1$, and so is $Z\cdot Z$.
\end{proof}

\begin{cor}
 The multiplicity of a splice quotient singularity and its
 universal abelian cover can be computed from the weighted
 dual graph.
\end{cor}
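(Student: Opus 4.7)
The plan is to derive this as an immediate specialization of \thmref{t:main}. The theorem shows that for any subgroup $H_1 \subset H$, the multiplicity $\mult(X_1,o)$ depends only on the weighted dual graph $\Gamma$ and on the choice of $H_1 \subset H$. So I only need to observe that the two cases of interest correspond to two canonical subgroups of $H$ which themselves are determined by $\Gamma$.

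First I would handle the splice quotient itself. Here $X_1 = X = X^u/H$, which corresponds to the subgroup $H_1 = H$. Then $|H/H_1| = 1$ and the theorem yields $\mult\X = -Z \cdot Z$ where $Z = \gcd \varphi^*\cM^{H}_{gen}$. Since the semigroup $\cM$ and the character map $\Theta$ are both defined from the intersection matrix $I(E)$, which is encoded in $\Gamma$, the set $\cM^{H}_{gen}$ is graph-theoretic; combining with the graph-theoretic nature of the construction of $\varphi$ (see the last sentence of the proof of \thmref{t:main}), this gives $\mult\X$ from $\Gamma$ alone.

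Next I would handle the universal abelian cover. Here $X_1 = X^u$ corresponds to $H_1 = \{0\}$, the trivial subgroup; then $|H/H_1| = |H|$ and $\cM^{H_1} = \cM$ (since $\{0\}^{\bot} = \hat H$, so the condition $\Theta(D) \in H_1^{\bot}$ is vacuous). Thus $Z = \gcd \varphi^*\cM_{gen}$ and $\mult\abX = -|H|\, Z \cdot Z$, again a quantity extracted from $\Gamma$.

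There is no real obstacle here beyond noting that the data $H = L^*/L$, the pairing $\cdot$, the character $\Theta$, and the notion of monomial cycle are all read off from the intersection matrix of $E$, hence from $\Gamma$; and that by \proref{p:freeend} the passage from the minimal good resolution graph to the graph of $\tX$ (accounting for the base locus $\cB'$) is itself graph-theoretic, as already noted in the proof of \thmref{t:main}. So the corollary reduces to invoking \thmref{t:main} twice with the distinguished subgroups $H_1 = H$ and $H_1 = \{0\}$.
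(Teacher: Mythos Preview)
Your proposal is correct and matches the paper's approach: the paper states the corollary immediately after \thmref{t:main} with no proof, treating it as the evident specialization to the two canonical subgroups $H_1=H$ and $H_1=\{0\}$, exactly as you do.
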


\section{Examples}\label{s:ex}

We will show some examples as an application of our result.
Suppose that $\X$ is a splice quotient and  $\tX$ a good resolution satisfying the end curve condition. 
Let $\Gamma$ be the weighted dual graph associated with $\tX$.
For a subgroup $H_1\subset H$, let $X_1=X^u/H_1$ and  $Z=\gcd
\cM^{H_1}$. Note that $\gcd\cM^{H_1}=\gcd\cM^{H_1}_{gen}$ and that if $\cM^{H_1}$ satisfies the GCD condition then so does  $\cM^{H_1}_{gen}$.
Therefore if we obtain a good resolution on which $\cM^{H_1}$ satisfies the GCD condition and $\cO(-Z)$ is generated by global sections, then we can compute the multiplicity without gaining the generators of $\cM^{H_1}$. 
The following remark will be helpful for our computation.

\begin{rem}\label{r:free}

(1) If $Z\cdot E_v=0$, then the set $\cM^{H_1}$ satisfies the GCD condition at every point of $E_v$.
Therefore, if the GCD condition is not satisfied at
 $E_v\cap E_w$, then $Z\cdot E_v$ and $Z\cdot E_w$ are negative.
This is shown as follows. Suppose that  $E_w\cap E_v\ne\emptyset$.
Let $D_v$ be an element of the set
$$
 \cM^{H_1}_v:=\defset{D\in \cM^{H_1}}{M_v(D)=M_v(Z)}
$$ such
 that $M_w(D_v)$ is the minimum of $M_w(\cM^{H_1}_v)$.
If $M_w(D_v)>M_w(Z)$, i.e., the GCD condition is not satisfied
 at $E_v\cap E_w$, then $Z\cdot E_v<D_v\cdot E_v\le 0$.			     

(2) Suppose $b_i\in \cB$. If there exists $D\in \cM^{H_1}_{i}$ such
 that $D\cdot E_i=0$, then the blowing-up at
 $b_i$ in the discussion of \sref{s:mult} can be
 omitted, because in this case $\cO_{\tX}(-\sigma(Z))$ is
 generated by global sections at the point $b_i$.
Note that if $Z\cdot E_i=0$ then such a cycle $D$ exists.
\end{rem}

We denote by $\gen{D_1, \dots, D_k}$ the subgroup of $H$ generated by
 $\alpha(D_1), \dots, \alpha(D_k)$, 
where  $\alpha\:L^*\to H=L^*/L$ is the projection.

\begin{ex}
Suppose that  $\tX$ is the minimal good resolution and $\Gamma$ is as
 follows:
 \begin{center}
 \setlength{\unitlength}{0.5cm}
    \begin{picture}(13,3.5)(-2,-0.5)
 \multiput(-2,0)(0,2){2}{\ten}
 \put(-3,-0.2){$2$}
 \put(-3,1.8){$1$}
 \multiput(5,1)(2,0){2}{\ten}
 \put(0.5,1.4){$5$}
 \put(2.5,1.4){$6$}
 \put(2.5,0.2){$-4$}
 \put(4.5,1.4){$7$}
 \put(6.5,1.4){$8$}
 \put(8.5,1.8){$9$}
 \put(11.5,1.8){$3$}
 \put(11.5,-0.2){$4$}
 \put(8.5,-0.3){$10$}
 \put(3,1){\ten}
 \put(1,1){\ten}
 \multiput(9,0.5)(0,1){2}{\ten}
 \multiput(11,0)(0,2){2}{\ten}
 \put(1,1){\line(1,0){6}}
 \put(-2,0){\line(3,1){3}}
 \put(-2,2){\line(3,-1){3}}
 \put(7,1){\line(4,1){4}}
 \put(7,1){\line(4,-1){4}}
 \end{picture}
  \end{center}
 where the negative number indicates the weight of the
 vertex but the weights $-2$ are omitted, 
and the positive integers $i$  indicate $E_i$.
We may take the following equation for $X^u$:
 $$
 z_3z_4+z_1^2+z_2^2=z_3^3+z_4^3+z_1^5z_2^5=0.
 $$
 Since the leading forms (with $\deg z_i=1$) of these polynomials have
 degree 2 and 3 respectively  and forms a  regular
 sequence, we have
 $\mult (X^u,o)=6$.
 After a coordinate change,  the equation of $X=X^u/H$
 is represented as $z^2-x^6y^6-4xy(x+y)^3=0$ (cf. \cite[3.11]{o.pj}).
 Hence $\mult\X=2$.
We also see that $\X$ is an elliptic singularity (in the sense of \cite{wag.ell}) with $p_g\X=2$ (cf. \cite{nem.ellip}).

Now we show how we can use our method to compute the multiplicity of abelian covers of $\X$.
We denote an element $\sum_{i=1}^{10}a_iE_i$ by the sequence
$(a_1\; \cdots \; a_{10})$. 
Then 
$$
\left(\begin{array}{c}
 E_1^* \\ E_2^* \\ E_3^* \\ E_4^*  \\ E_5^*
 \end{array}\right)
=
\left(
\begin{array}{cccccccccc}
 1 & \frac{1}{2} & \frac{1}{2} & \frac{1}{2} & 1 & \frac{1}{2} & 1 &
   \frac{3}{2} & 1 & 1 \\
 \frac{1}{2} & 1 & \frac{1}{2} & \frac{1}{2} & 1 & \frac{1}{2} & 1 &
   \frac{3}{2} & 1 & 1 \\
 \frac{1}{2} & \frac{1}{2} & \frac{7}{3} & \frac{5}{3} & 1 & 1 & 3 &
   5 & \frac{11}{3} & \frac{10}{3} \\
 \frac{1}{2} & \frac{1}{2} & \frac{5}{3} & \frac{7}{3} & 1 & 1 & 3 &
   5 & \frac{10}{3} & \frac{11}{3} \\
 1 & 1 & 1 & 1 & 2 & 1 & 2 & 3 & 2 & 2
\end{array}
\right).
$$
 We see that  $H=\gen{E_1^*, E_3^*}$ and  $|H|=12$.
 (The Hermite normal form of $I(E)$ shows generators of $H$
 and their relations.)
If $H_1=\{0\}$, then $X_1=X^u$ and  
$$
Z=\gcd\{E_1^*, \dots, E_4^*\}=\frac{1}{2}E_5^*.
$$ 
By \remref{r:free}, $\cM^{H_1}$ satisfies the GCD condition 
and no blowing-up is needed. 
Thus  we can apply  \thmref{t:main} immediately; 
 $$
 \mult (X_1,o)=|H|(-Z\cdot Z)=12\cdot (1/2)=6.
 $$ 

If $H_1=\gen{E_1^*}$, then  $|H_1|=2$.
Since $D\cdot E_i^*=-M_i(D)$ for $D\in \cM$, it follows from the definition of $\cM^{H_1}$ that $\cM^{H_1}=\defset{D\in \cM}{M_1(D)\in \Z}$.
Thus we have 
$$
Z=\gcd\{E_1^*, E_2^*+E_3^*, E_3^*+E_4^*, E_4^*+E_2^*\}=E_1^*.
$$
 Since $E_2^*+E_3^*\in \cM^{H_1}$ and $M_1(E_2^*+E_3^*)=M_1(Z)$, 
again by \remref{r:free} we can apply  \thmref{t:main} immediately; 
 $$
 \mult (X_1,o)=|H/H_1|(-Z\cdot Z)=(12/2)\cdot 1=6.
 $$
 For other cases, we can easily compute the
 multiplicity of $(X_1,o)$ without blowing-ups; see Table \ref{fig:table}.

\begin{table}[ht]
$$
 \begin{array}{c|c|c|c|c}
 \hline
 H_1  & H_1^{\flat} & |H_1| & Z & \mult (X_1) \\
 \hline
   \{0\} & H &   1 &   (1/2)E_5^* &  6 \\

   \gen{E_1^*} & \gen{E_1^*, 2E_3^*} &   2 &  E_1^* &  6 \\
 
  \gen{3 E_3^*} & \gen{E_3^*} &  2 &  E_6^* &  6 \\
 
  \gen{E_1^*+3E_3^*} & \gen{E_1^*+E_3^*} &  2 &  E_2^* &  6 \\
 
  \gen{2E_3^*} & \gen{E_1^*, 3E_3^*} &  3 &  (1/2)E_5^* &  2 \\
 
  \gen{E_1^*, 3E_3^*} & \gen{2 E_3^*} &  4 &  E_5^* &  6 \\
 
  \gen{E_3^*} & \gen{3 E_3^*} &  6 &  E_5^* &  4 \\
 
   \gen{E_1^*, 2E_3^*} & \gen{E_1^*} &  6 &  E_1^* &  2 \\
 
  \gen{E_1^*+E_3^*} & \gen{E_1^*+3 E_3^*} &  6 &  E_2^* &  2 \\
 
  H & \{0\} &  12 &  E_5^* & 2 \\
 \hline
 \end{array}
 $$
\caption{}\label{fig:table}
\end{table}
\end{ex}

\begin{ex}
Next we suppose that  $\tX$ is the minimal good resolution and $\Gamma$ is as
 follows:
 \begin{center}
 \setlength{\unitlength}{0.5cm}
    \begin{picture}(13,3.5)(-2,-0.5)
 \multiput(-2,0)(0,2){2}{\ten}
 \put(-3,-0.2){$2$}
 \put(-3,1.8){$1$}
 \put(-2,2.2){$-3$}
 \multiput(5,1)(2,0){2}{\ten}
 \put(0.5,1.4){$5$}
 \put(0.5,0.2){$-3$}
 \put(2.5,1.4){$6$}
 \put(2.5,0.2){$-4$}
 \put(4.5,1.4){$7$}
 \put(6.5,1.4){$8$}
 \put(8.5,1.8){$9$}
 \put(11.5,1.8){$3$}
 \put(11.5,-0.2){$4$}
 \put(8.5,-0.3){$10$}
 \put(3,1){\ten}
 \put(1,1){\ten}
 \multiput(9,0.5)(0,1){2}{\ten}
 \multiput(11,0)(0,2){2}{\ten}
 \put(1,1){\line(1,0){6}}
 \put(-2,0){\line(3,1){3}}
 \put(-2,2){\line(3,-1){3}}
 \put(7,1){\line(4,1){4}}
 \put(7,1){\line(4,-1){4}}
 \end{picture}
  \end{center}
We can take the following  equation for $X^u$:
 $$
 z_3z_4+z_1^3+z_2^2=z_3^3+z_4^3+z_1^{14}z_2^{17}=0.
 $$
As in the previous example,  $\mult (X^u,o)=6$.
We see that $\X$ is not Gorenstein because $c_1(K_{\tX})\not\in L$, and $p_g\X=1$ by the formula for the geometric genus (see \cite{o.pg-splice}).

Let us  compute $\mult (X^u,o)$ using our method. So let $H_1=\{0\}$.
We have
$$
\left(\begin{array}{c}
 E_1^* \\ E_2^* \\ E_3^* \\ E_4^*  
 \end{array}\right)
=
 \left(
\begin{array}{cccccccccc}
 \frac{2}{5} & \frac{1}{10} & \frac{1}{10} & \frac{1}{10} &
   \frac{1}{5} & \frac{1}{10} & \frac{1}{5} & \frac{3}{10} &
   \frac{1}{5} & \frac{1}{5} \\
 \frac{1}{10} & \frac{13}{20} & \frac{3}{20} & \frac{3}{20} &
   \frac{3}{10} & \frac{3}{20} & \frac{3}{10} & \frac{9}{20} &
   \frac{3}{10} & \frac{3}{10} \\
 \frac{1}{10} & \frac{3}{20} & \frac{119}{60} & \frac{79}{60} &
   \frac{3}{10} & \frac{13}{20} & \frac{23}{10} & \frac{79}{20} &
   \frac{89}{30} & \frac{79}{30} \\
 \frac{1}{10} & \frac{3}{20} & \frac{79}{60} & \frac{119}{60} &
   \frac{3}{10} & \frac{13}{20} & \frac{23}{10} & \frac{79}{20} &
   \frac{79}{30} & \frac{89}{30}
\end{array}
\right).
$$
A direct  computation shows that
$$
|H|=60, \quad Z=\frac{1}{10}(E_1^*+3E_5^*), \quad
Z\cdot Z=-7/100.
$$
Then $|H|(-Z\cdot Z)=21/5\not\in \Z$. In fact, 
the set $\{E_1^*, \dots, E_4^*\}$
does not satisfy the GCD condition at $E_1\cap E_5$.
However, 
since  $M_1(E_1^*)=M_1(4 E_2^*)$, it follows that
 $b_1\not\in \cB$ by \proref{p:freeend}. 
Thus we need only blowing-ups over  $E_1\cap E_5$ (cf. \remref{r:free}) to obtain  a birational morphism $\varphi\:W\to\tX$
such that $\varphi^*\{\frac{2}{5}E_1+\frac{1}{5}E_5,
\frac{1}{10}E_1+\frac{3}{10}E_5\}$ satisfies the GCD condition (cf. \lemref{l:to-gcd}).
Then we obtain the following graph:
 \begin{center}
 \setlength{\unitlength}{0.5cm}
    \begin{picture}(19,3.5)(-8,-0.5)
 \multiput(-8,2)(2,0){3}{\ten}
 \put(-8,2){\line(1,0){6}}
 \multiput(-2,0)(0,2){2}{\ten}
 \put(-3,-0.2){$2$}
 \put(-8.8,1.2){$1$}
 \put(-6.8,1.2){$11$} 
\put(-4.8,1.2){$12$}
\put(-2.8,1.2){$13$}
 \put(-2.2,2.3){$-1$}
 \put(-4.2,2.3){$-2$}
 \put(-6.2,2.3){$-2$}
 \put(-8.2,2.3){$-4$}
 \multiput(5,1)(2,0){2}{\ten}
 \put(0.5,1.4){$5$}
 \put(0.5,0.2){$-6$}
 \put(2.5,1.4){$6$}
 \put(2.5,0.2){$-4$}
 \put(4.5,1.4){$7$}
 \put(6.5,1.4){$8$}
 \put(8.5,1.8){$9$}
 \put(11.5,1.8){$3$}
 \put(11.5,-0.2){$4$}
 \put(8.5,-0.3){$10$}
 \put(3,1){\ten}
 \put(1,1){\ten}
 \multiput(9,0.5)(0,1){2}{\ten}
 \multiput(11,0)(0,2){2}{\ten}
 \put(1,1){\line(1,0){6}}
 \put(-2,0){\line(3,1){3}}
 \put(-2,2){\line(3,-1){3}}
 \put(7,1){\line(4,1){4}}
 \put(7,1){\line(4,-1){4}}
 \end{picture}
  \end{center}
We may assume $\tX$ is a good resolution with the graph above.
Then we obtain that $Z=\frac{1}{10}E_{13}^*$ and $Z\cdot Z=-\frac{1}{10}$. 
By \thmref{t:main}  we have 
$\mult (X^u,o)=60\cdot\frac{1}{10}=6$.
\end{ex}


\providecommand{\bysame}{\leavevmode\hbox to3em{\hrulefill}\thinspace}
\providecommand{\MR}{\relax\ifhmode\unskip\space\fi MR }
\providecommand{\MRhref}[2]{%
  \href{http://www.ams.org/mathscinet-getitem?mr=#1}{#2}
}
\providecommand{\href}[2]{#2}

\end{document}